\tikzstyle{block} = [draw, rectangle, line width=0.7mm]
\tikzstyle{sum} = [draw, circle, line width=0.5mm, minimum width=3mm, inner sep=0mm]
\newcommand{\tran}{{\mkern-1.5mu\mathsf{T}}}
\DeclareMathOperator{\diag}{diag}
\newcommand{\bmat}[1]{\begin{bmatrix} #1 \end{bmatrix}}
\theoremstyle{definition}
\newtheorem{definition}{Definition}
\newtheorem{theorem}{Theorem}
\newtheorem{prop}{Proposition}
\newtheorem{lemma}{Lemma}[theorem]
\newtheorem{corollary}{Corollary}[theorem]
\newtheorem{remark}{Remark}
\newtheorem{conjecture}{Conjecture}
\def\H2{\mathcal{H}_2}
\def\L1{\mathcal{L}_1}
\def\R{\mathbb{R}}
\def\D{\mathcal{D}}
\def\bDelta{\boldsymbol{\Delta}}
\def\bdelta{\boldsymbol{\delta}}
\def\nubar{\overline{\nu}}
\def\deltabar{\overline{\delta}}
\def\Hinf{\mathcal H_\infty}
\def\dyndeltaset{\mathscr{D}}
\def\statdeltaset{\mathscr{C}}
\def\commutants{\mathcal{D}}
\title{\LARGE \bf
$\nu$-Analysis: A New Notion of Robustness for Large Systems with Structured Uncertainties
}
\author{Olle Kjellqvist and John C. Doyle
	\thanks{O. Kjellqvist is with the Department of Automatic Control, Lund University. J. C. Doyle is with Computing and Mathematical Sciences, California Institute of Technology. {\tt\small olle.kjellqvist@control.lth.se, doyle@caltech.edu}. O.K has received funding from the European Research Council (ERC) under the European Union’s Horizon 2020 research and innovation programme under grant agreement No 834142
(ScalableControl).
}
}
\begin{document}

\maketitle
\thispagestyle{empty} 
\pagestyle{empty}

\begin{abstract}

We present a new, scalable alternative to the structured singular value, which we call $\nu$, provide a convex upper bound, study their properties and compare them to $\ell_1$ robust control.
The analysis relies on a novel result on the relationship between robust control of dynamical systems and non-negative constant matrices.

\end{abstract}
\section{Introduction}
We consider a system to be robust if it is unlikely to fail.
The usual setting to analyze the robustness of a system is to study how it interacts with uncertainty.
Standard approaches impose structure on the uncertainty and certify robustness against its size.
However, the way we currently measure the size of uncertainty is unsuitable for large-scale networks.
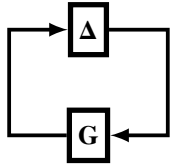
\begin{figure}
    \centering
    \begin{tikzpicture}[node distance = 4em, line width=0.5mm, inner ysep=6pt, >=latex]
	\node [block, align = center] (sys) {$\mathbf G$};
	\node[block, above of = sys, align = center] (delta) {$\bDelta$};
	\draw[->] (sys) -- ++(-3em, 0) |- (delta);
	\draw[<-] (sys) -- ++(3em, 0) |- (delta);
\end{tikzpicture}
    \caption{Interconnection of the stable system $\mathbf G$ and uncertainty $\bDelta$ considered for robust stability.}
    \label{fig:robust_stability}
\end{figure}

To see this, consider the standard robust control set-up in Fig.~\ref{fig:robust_stability}.  $\mathbf G$ is a stable causal linear system with $n$ inputs and outputs. $\bDelta$ is unknown but belongs to the set $\dyndeltaset$ consisting of diagonal linear time-varying (LTV) systems that are causal, stable, and have $n$ inputs and outputs. We want to determine which of the following two systems are most likely to fail.

\begin{equation*}
\begin{aligned}
    \mathbf P_1:& \begin{cases}
            x_1(t+1) = \delta_{1}x_1(t)\\
            \qquad \vdots \\
            x_n(t+1) = \delta_{n}x_n(t)
            \end{cases},\ 
    \mathbf P_2:& \begin{cases}
            x_1(t+1) = \delta_{1}x_2(t)\\
            \qquad \vdots \\
            x_n(t+1) = \delta_{n}x_1(t).
            \end{cases}
\end{aligned}
\end{equation*}

$\mathbf P_1$ is a set of decoupled first-order systems with uncertain time constants, and $\mathbf P_2$ is a delayed ring with uncertain weights. Robustness measures based on structured singular values~\cite{Zhou1998Robust, Dullerud2010Robust} or $\ell_1$ robust control methods~\cite{Dahleh1993Controller} agree that both systems are robust against diagonal uncertainties whose \emph{largest}\footnote{In $\Hinf$-- and $\ell_1$--norm respectively} diagonal element is bounded by one.
It is tempting to conclude that $\mathbf P_1$ and $\mathbf P_2$ are equally likely to fail.

A more careful study reveals that destabilizing $P_1$ is easy; a constant gain $|\delta_k| > 1$ for any $k$ will render the closed-loop unstable. However, all of the uncertainties must simultaneously be large $(\|\delta_1\|\|\delta_2\| \cdots \|\delta_n\| \geq 1$ to destabilize $P_2$. 
In plain words, destabilizing $\mathbf P_2$ requires large globally coordinated perturbations directly affecting every node.

This article proposes a new robustness measure $\nu$\footnote{The robustness measure $\nu$ is unrelated to Vinnicombe's $\nu$-gap metric. We apologize for the confusion caused by overloading $\nu$ and highlight the need for further research into new Greek letters.} that captures sparsity in the uncertainty. 
It is large for dense uncertainties and small when sparse uncertainties can destabilize the system.
For example, $\nu(\mathbf P_1) = 1$ and $\nu(\mathbf P_2) = 1/n$.
We focus on diagonal linear time-varying and nonlinear uncertainty in discrete time.

This work is primarily motivated by recent progress to distributed and localized controller design for large-scale networks~\cite{Anderson2019}, modeling and analysis of the feedback in neuroanatomy~\cite{acc1, acc2, acc3} and the need for better control methods for emerging large-scale systems such as smart-grids and intelligent transportation systems.

\subsection{Outline}
Section~\ref{sec:Prelims} introduces notation and gives some background on robust stability for static and dynamic matrices. We introduce and analyze the new robustness measure in Section~\ref{sec:nu} and provide a convex upper bound.
Section~\ref{sec:nubar_analysis} describes the properties of the upper bound and in Section~\ref{sec:nubar_computation} we show how to compute it and characterize the optimal solution.
Concluding remarks and directions for future research are contained in Section~\ref{sec:conclusions}.

\section{Preliminaries and notation}
\label{sec:Prelims}
Latin letters denote real-valued vectors matrices like $x \in \R^n$ and $A \in \R^{n\times m}$.
For a matrix $A \in \R^{n \times m}$, $A_{ij}$ means the element on the $i$th row and $j$th column, and we refer to the $i$th element of a vector $x \in \R^n$ by $x_i$.
The $p$-norm of a vector $x \in \R^n$ is defined by
\[
    |x|_p := \begin{cases}
                \left(\sum_{i = 1}^n |x_i|^p\right)^{1/p} & \text{if } p \in [0, \infty),\\
                \max_i |x_i| & \text{if } p = \infty.
            \end{cases}
\]

For a matrix, $A \in \R^{n \times m}$, the induced norm from $q$ to $p$ is defined by
\[
    |A|_{q, p} : = \max_{x}\frac{|Ax|_p}{|x|_q}.
\]
For an infinite sequence $\mathbf x = \{x(0), x(1), \ldots\}, x(k) \in \R^n$, $m_x = (|x_1|_\infty, \cdots, |x_n|_\infty)$ is called the magnitude vector of $\mathbf x$ and $\ell_\infty^n$ denotes the set of all such sequences such that
\[
    \|\mathbf x\|_\infty := |m_x|_\infty < \infty.
\]
We define the truncation operator $P_T$ on $\ell_\infty^n$ by
\[
    P_T(x) = (x(0), \ldots, x(T), 0, \ldots).
\]
By $\ell_{\infty, e}^n$ we mean the extended $\ell_\infty^n$-space: $\{x \in \ell_\infty^n: (P_Tx) \in \ell^n_\infty: T \geq 0 \}$.
An operator $H : \ell_\infty^n \to \ell_\infty^n$ is causal if $P_kH = P_kHP_k$ and time-invariant if it commutes with the delay operator $z^{-1}$.
We call $\mathbf H$ $\ell_\infty$ stable if
\[
    \|\mathbf H\|_1 := \|\mathbf H\|_{\infty, \infty} = \sup_t\sup_{0 \neq x \in \ell_{\infty,e}}\frac{\|P_t \mathbf H x\|_\infty}{\|P_tx\|_\infty} < \infty,
\]
where $\|\mathbf H\|_{\infty,\infty}$ is called the induced norm on $\ell^n_\infty$.
A linear time-varying operator $\mathbf H$ is fully characterized by it's impulse response (convolution kernel) $H(t, \tau)$ and it operates on signals $\mathbf x \in \ell_{\infty, e}$ by convolution, $(\mathbf Hx)(t) = \sum_{\tau = 0}^t H(t, \tau)x(\tau)$.
Expressed in the elements of its convolution kernel, the induced norm becomes $\|\mathbf H\|_{1} = \max_i\sum_{j = 1}^n\sup_t\sum_{\tau = 0}^t |H_{ij}(t, \tau)|$.
It will be convenient to express the norm in terms of the magnitude matrix of $\mathbf H$
\begin{equation}
    \label{eq:magnitude_matrix}
    M_H := \begin{bmatrix}
    \|H_{11}\|_1 & \cdots & \|H_{1n}\|_1 \\
    \vdots & \ddots & \vdots \\
    \|H_{n1}\|_1 & \cdots & \|H_{nn}\|_1 
    \end{bmatrix},
\end{equation}
and $\|\mathbf H\|_1 = \max_i \sum_j M_{ij}$.
\subsection{Matrix induced norms and stability of static systems}
Before diving into induced norms for dynamical systems, we explore norms on constant $n$-dimensional vectors and square matrices.
For a constant matrix $M\in \R^{n\times n}$, robust stability with respect to bounded \emph{unstructured} uncertainty means that $\det(I - \Delta M)$ is invertible for all $|\Delta| \leq \gamma$ in some norm.
Let $\frac{1}{p} + \frac{1}{q} = 1$, then $\det(I - \Delta M)$ is invertible for all $|\Delta|_{p, q} \leq 1$ if and only if $|M|_{q, p} < 1$.
See Table~\ref{tab:tropptable} for a table of the most common compatible $p$-norms. 
\begin{table}
    \vspace{1em}
    \centering
    \caption{Summary of matrix induced norms, adapted from~\cite{Tropp2004Topics}. The norm on the domain (D) is determined by the column, and the codomain (CD) by the row.}
    \label{tab:tropptable}
    \begin{tabular}{cccc}
        \toprule CD$\backslash$D& $|\cdot|_1$ & $|\cdot|_2$ & $|\cdot|_{\infty}$  \\
        \midrule
         $|\cdot|_1$ & $\max_{j}\sum_{i = 1}^n |A_{ij}|$ & NP-HARD & NP-HARD\\
         $|\cdot|_2$ & $\sqrt{\max_{j}\sum_{i = 1}^n |A_{ij}|^2}$ & $\overline\sigma(A)$ & NP-HARD\\
         $|\cdot|_\infty$ & $\max_{ij}|A_{ij}|$ & $\sqrt{\max_{i}\sum_{j = 1}^n |A_{ij}|^2}$ & $\max_{i}\sum_{j = 1}^n |A_{ij}|$ \\
         \bottomrule
    \end{tabular}
\end{table}

\subsection{Robust stability with diagonal uncertainty}
Let $\dyndeltaset$ be the set of $\ell_\infty$-stable causal linear time-varying operators whose off-diagonal elements are zero, and $\D\subset \R^{n \times n}$ be the set of diagonal matrices with positive diagonal entries.
Further, define
\begin{equation}
    \label{eq:mu}
    \mu_\dyndeltaset(\mathbf G) = \frac{1}{\inf\{\|\bDelta\|_{\infty, \infty} : \bDelta \in \dyndeltaset,\ (I - \mathbf G\bDelta)^{-1} \text{ unstable} \}}.
\end{equation}
The following Theorem characterizes robust stability of Fig.~\ref{fig:robust_stability} as conditions on $M_G$.

\begin{theorem}[Theorem 2 in \cite{Dahleh1993Controller}]
\label{thm:DhaKha}
For $\bDelta \in \dyndeltaset$ with $\|\bDelta\|_{\infty, \infty} \leq 1$, the following are logically equivalent :
\begin{enumerate}
    \item The system in Fig. \ref{fig:robust_stability} is robustly stable.
    \item $\rho(M_G) < 1$, where $\rho(\cdot)$ denotes the spectral radius.
    \item $x \leq M_G x$ and $x \geq 0$ imply that $x = 0$.
    \item $\inf_{D \in \commutants}|D M_G D^{-1}|_{\infty, \infty} < 1$
    \item $\mu_\dyndeltaset(\mathbf G) < 1$.
\end{enumerate}

\end{theorem}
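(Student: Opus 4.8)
The plan is to prove the chain of equivalences by establishing a cycle of implications, since several of these statements are classical Perron--Frobenius facts about the nonnegative matrix $M_G$ while the outer two statements connect the combinatorial/spectral conditions back to the dynamical interconnection. I would organize the argument as $(2)\Leftrightarrow(3)$, $(2)\Leftrightarrow(4)$, and $(1)\Leftrightarrow(5)\Leftrightarrow(2)$, treating the purely matrix-analytic equivalences separately from the operator-theoretic ones.

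First I would dispatch the equivalences among $(2)$, $(3)$, and $(4)$ using Perron--Frobenius theory, since $M_G$ is entrywise nonnegative by construction~\eqref{eq:magnitude_matrix}. For $(2)\Leftrightarrow(3)$, recall that a nonnegative matrix has spectral radius $\rho(M_G)<1$ iff there is no nonzero nonnegative $x$ with $M_G x \geq x$; this is essentially the statement that the Perron eigenvalue is the relevant extremal quantity, and I would invoke the Collatz--Wielandt characterization $\rho(M_G)=\max_{x\geq 0,\,x\neq 0}\min_i (M_G x)_i/x_i$. For $(2)\Leftrightarrow(4)$, the key fact is that for nonnegative matrices the diagonally-scaled induced $\infty$-norm $\inf_{D\in\commutants}|DM_GD^{-1}|_{\infty,\infty}$ equals $\rho(M_G)$, which follows because $|DM_GD^{-1}|_{\infty,\infty}=\max_i \sum_j (d_i/d_j)(M_G)_{ij}$ and choosing $d$ to be the Perron eigenvector makes each row sum equal to $\rho(M_G)$; this gives $\leq$, while $\geq$ holds since the induced norm always upper-bounds the spectral radius.

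Next I would handle the dynamical equivalences $(1)\Leftrightarrow(5)$ and the bridge $(5)\Leftrightarrow(2)$. The equivalence $(1)\Leftrightarrow(5)$ is essentially definitional: robust stability means $(I-\mathbf G\bDelta)^{-1}$ is stable for all admissible $\bDelta$ with $\|\bDelta\|_{\infty,\infty}\leq 1$, and $\mu_\dyndeltaset(\mathbf G)<1$ says precisely that no destabilizing $\bDelta$ exists inside the closed unit ball, so I would just unwind the definition~\eqref{eq:mu} carefully, paying attention to whether the infimum is attained and to the strict-versus-nonstrict inequality at the boundary. The substantive bridge is $(5)\Leftrightarrow(2)$, equivalently $(1)\Leftrightarrow(2)$: the forward direction $(2)\Rightarrow(1)$ uses the small-gain theorem together with condition $(4)$, scaling $\mathbf G$ by a constant diagonal $D\in\commutants$ so that $\|D\mathbf G D^{-1}\|_1<1$ and noting the scaling commutes with the diagonal $\bDelta$, hence leaves the interconnection stability unchanged.

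The main obstacle, and the step I would spend the most care on, is the converse $(1)\Rightarrow(2)$ (equivalently $\neg(2)\Rightarrow\neg(1)$): given $\rho(M_G)\geq 1$, I must explicitly construct a diagonal LTV $\bDelta\in\dyndeltaset$ with $\|\bDelta\|_{\infty,\infty}\leq 1$ that destabilizes the loop. This is where the power of allowing \emph{time-varying} perturbations enters, and it is the genuinely nontrivial content of the theorem. The idea is to use the nonnegative Perron eigenvector $x\geq 0$ of $M_G$ (with $M_G x \geq x$ from $\neg(3)$) together with the signals that achieve the entrywise $\ell_1$-norms $\|G_{ij}\|_1$; by aligning the phases of a growing signal appropriately at each time step, one builds a scalar time-varying gain in each channel, bounded by one in magnitude, that reconstructs the worst-case amplification predicted by $M_G$ and thereby produces an unbounded solution. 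I would need to argue this construction stays causal and respects the diagonal structure, and that the achieved gain indeed saturates the magnitude matrix; since this destabilization argument is the technical heart, I expect the author to either cite the construction from~\cite{Dahleh1993Controller} or reproduce it, and I would model my proof on reducing $(1)\Rightarrow(2)$ to this explicit worst-case perturbation.
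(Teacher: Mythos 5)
The paper does not prove this statement at all: it is imported verbatim as Theorem~2 of \cite{Dahleh1993Controller} (Dahleh--Khammash), so there is no in-paper proof to compare against, and the genuinely hard step you correctly isolate --- constructing a diagonal, causal, norm-bounded LTV $\bDelta$ that destabilizes the loop when $\rho(M_G)\geq 1$ --- is exactly the content the paper outsources to that reference. Your overall architecture (Perron--Frobenius for $(2)\Leftrightarrow(3)\Leftrightarrow(4)$, small gain with commuting diagonal scalings for $(2)\Rightarrow(1)$, explicit worst-case LTV perturbation for the converse) matches the standard proof in the cited literature, so as a reconstruction it is on the right track.

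Two caveats in your matrix-analytic steps deserve tightening. First, in the Collatz--Wielandt characterization the inner minimum must run only over indices with $x_i>0$, otherwise the quotient is undefined for the nonnegative (not necessarily positive) vectors that statement $(3)$ quantifies over. Second, your argument that choosing $d$ from the Perron eigenvector makes all row sums of $DM_GD^{-1}$ equal to $\rho(M_G)$ requires a \emph{strictly positive} eigenvector, which is guaranteed only when $M_G$ is irreducible; in the reducible case the eigenvector can have zero entries and $D$ fails to be invertible, so you need a perturbation argument (e.g.\ apply the irreducible case to $M_G+\epsilon \mathbf{1}\mathbf{1}^\tran$ and let $\epsilon\to 0$), and correspondingly the infimum in $(4)$ need not be attained --- which is consistent with the theorem, since $(4)$ only asserts $\inf<1$. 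Finally, note that $(1)\Leftrightarrow(5)$ is \emph{not} purely definitional as you first suggest (you partly flag this yourself): if the infimum in \eqref{eq:mu} equalled $1$ without being attained, one could have robust stability on the closed unit ball yet $\mu_\dyndeltaset(\mathbf G)=1$. This boundary case is excluded precisely by the destabilizing construction you defer to, which shows $\mu_\dyndeltaset(\mathbf G)=\rho(M_G)$ with the infimum attained; so $(1)\Leftrightarrow(5)$ should be routed through that construction rather than treated as an unwinding of definitions.
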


\begin{remark}
As $\rho(A) = \rho(A^\top)$ we have that the convex upper bound (which is exact for linear time-varying uncertainty) can be computed either as the max row sum or max column sum. That is,  $\inf_{D \in \commutants} |DM_G D^{-1}|_{\infty, \infty} = \inf_{D \in \commutants} |DM_G D^{-1}|_{1,1}$.
\end{remark}
\section{\texorpdfstring{$\nu$}{}: The new \texorpdfstring{$\mu$}{}}\label{sec:nu}
Inspired by the role of LASSO~\cite{Tibshirani96Regression} in favoring sparse solutions to regression problems, we propose using the sum of $\ell_1$ norms, $\sum_{i = 1}^n \|\bdelta_i\|_1$.
The new robustness metric $\nu$ is defined as follows.
\begin{definition}[$\nu$]
\label{def:nu}
    Let $\dyndeltaset$ be the set of $\ell_\infty$-stable causal linear time-varying operators with $n$ inputs and outputs, whose off-diagonal elements are zero.
    Given a causal linear system $\mathbf G$ with $n$ inputs and outputs
    \[
    \nu_\dyndeltaset(\mathbf G) := \frac{1}{\inf\{\sum_{i=1}^n\|\bdelta_i\|_{1} : \bDelta \in \dyndeltaset,\ (I - \mathbf G\bDelta)^{-1} \text{ unstable}\}}.
    \]
\end{definition}

To study the properties of the new robustness measure and its relationship to $\mu$, we require insight into the relationship between that of destabilizing the dynamical system $\mathbf G$ and its magnitude matrix $M_G$. From Theorem~\ref{thm:DhaKha} we know that if there exists a $\bDelta$ that destabilizes Fig.~\ref{fig:robust_stability}, then there exists a constant matrix $\Delta$ with the same $\ell_1$ norm so that $I - \Delta M_G$ is singular, and vice versa.
Surprisingly, it turns out that the bounds on each diagonal entry of $\bDelta$ are equal to that of $\Delta$.
\begin{theorem}
\label{thm:equivalence}
Let $\dyndeltaset$ be the set of $\ell_\infty$-stable causal linear time-varying operators with $n$ inputs and outputs, whose off-diagonal elements are zero.
Further, let $\statdeltaset \subset \R^{n \times n}$ be the set of non-negative diagonal matrices.
Given upper bounds $\deltabar_{ii}$ for $i=1, \ldots, n$ and a stable, causal $n \times n$-dimensional system  $\mathbf G$, the following are logically equivalent:
\begin{enumerate}
    \item There exists a $\bDelta \in \dyndeltaset$, where each diagonal element is bounded from above, $\|\bdelta_{ii}\|_{\infty,\infty} \leq \deltabar_{ii}$, such that the system in Fig.~\ref{fig:robust_stability} is unstable.
    \item There exists a $\Delta \in \statdeltaset$, where each diagonal element is bounded from above, $\delta_{ii} \leq \deltabar_{ii}$, such that $I-\Delta M_G$ is singular.
\end{enumerate}
\end{theorem}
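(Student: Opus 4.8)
The plan is to introduce the constant non-negative diagonal matrix $\overline\Delta := \diag(\deltabar_{11}, \ldots, \deltabar_{nn})$ and to show that \emph{both} statements are equivalent to the single scalar condition $\rho(\overline\Delta M_G) \geq 1$; the claimed equivalence then follows immediately. The reason to expect this is the two-sided nature of the per-channel bounds: the most destabilizing admissible perturbation should saturate every bound, and saturating a channel means scaling the corresponding column of $M_G$ by exactly $\deltabar_{ii}$. The whole argument rests on (i) Perron--Frobenius theory for the entrywise non-negative matrix $\overline\Delta M_G$ and (ii) the similarity/trace identity $\rho(AB)=\rho(BA)$.

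For the static side, I would first record that $\Delta \mapsto \Delta M_G$ is entrywise monotone on the box $\{0 \leq \delta_{ii} \leq \deltabar_{ii}\}$, since $M_G \geq 0$ entrywise, and that the spectral radius of a non-negative matrix is monotone under entrywise increase. For the forward direction, $I - \Delta M_G$ singular means $1 \in \mathrm{spec}(\Delta M_G)$, so $\rho(\Delta M_G) \geq 1$, and monotonicity gives $\rho(\overline\Delta M_G) \geq \rho(\Delta M_G) \geq 1$. For the converse, if $r := \rho(\overline\Delta M_G) \geq 1$, then by Perron--Frobenius $r$ is an actual eigenvalue of the non-negative matrix $\overline\Delta M_G$; taking $\Delta = (1/r)\overline\Delta$, which lies in the box because $1/r \leq 1$, the matrix $\Delta M_G$ has eigenvalue $r/r = 1$, so $I - \Delta M_G$ is singular. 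This establishes statement (2) $\iff \rho(\overline\Delta M_G) \geq 1$.

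For the dynamic side, I would factor any admissible $\bDelta \in \dyndeltaset$ as $\bDelta = \overline\Delta\, \tilde\bDelta$, where $\tilde\bdelta_{ii} = \bdelta_{ii}/\deltabar_{ii}$ on channels with $\deltabar_{ii} > 0$ (channels with $\deltabar_{ii}=0$ force $\bdelta_{ii}=0$ and drop out of both conditions). Since scaling a scalar LTV operator scales its induced norm, $\|\bdelta_{ii}\|_{\infty,\infty} \leq \deltabar_{ii} \iff \|\tilde\bdelta_{ii}\|_{\infty,\infty} \leq 1$, i.e. $\|\tilde\bDelta\|_{\infty,\infty} \leq 1$. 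Because $\mathbf G\bDelta = (\mathbf G\overline\Delta)\tilde\bDelta$, statement (1) is exactly the assertion that the feedback loop around the stable causal system $\mathbf G\overline\Delta$ is destabilized by some diagonal LTV $\tilde\bDelta$ with $\|\tilde\bDelta\|_{\infty,\infty}\leq 1$. Applying (the negation of item 2 in) Theorem~\ref{thm:DhaKha} to $\mathbf G\overline\Delta$, this holds iff $\rho(M_{\mathbf G\overline\Delta}) \geq 1$. Finally, since $\overline\Delta$ is constant diagonal, each entry scales as $\|G_{ij}\deltabar_{jj}\|_1 = \deltabar_{jj}(M_G)_{ij}$, so $M_{\mathbf G\overline\Delta} = M_G\overline\Delta$ and $\rho(M_G\overline\Delta) = \rho(\overline\Delta M_G)$, closing the chain.

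The main obstacle I anticipate is the boundary case $\rho(\overline\Delta M_G) = 1$: I must ensure that an admissible destabilizing perturbation genuinely \emph{exists} there rather than only in a limiting sense. On the static side this is handled by Perron--Frobenius, which guarantees the spectral radius is attained as a real eigenvalue with a non-negative eigenvector, so the scaled $\Delta$ produces an exact singularity; on the dynamic side it is handled by the non-strict formulation ($\|\cdot\|_{\infty,\infty}\leq 1$, $\rho(M_G)<1$) of Theorem~\ref{thm:DhaKha}, whose contrapositive already asserts existence of a destabilizing perturbation whenever $\rho \geq 1$. The remaining care is purely bookkeeping: verifying the magnitude-matrix scaling identity and correctly excising channels with $\deltabar_{ii}=0$.
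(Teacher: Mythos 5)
Your proof is correct and takes essentially the same route as the paper's: both factor the per-channel bounds into a constant diagonal matrix (your $\overline\Delta$ is the paper's $R_\Delta$), absorb it into the stable plant using commutativity of diagonal operators, and apply Theorem~\ref{thm:DhaKha} to the scaled system $\mathbf G\overline\Delta$. The only difference is explicitness: where the paper directly asserts the existence of the constant $\widehat\Delta$ from the unstable dynamic loop, you route both statements through the single pivot $\rho(\overline\Delta M_G)\geq 1$ and verify the static leg via entrywise monotonicity of the spectral radius and Perron--Frobenius (including the boundary case $\rho=1$ and the identities $M_{\mathbf G\overline\Delta}=M_G\overline\Delta$, $\rho(AB)=\rho(BA)$), details the paper leaves implicit.
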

\begin{proof}
    We start by showing that the first claim implies the second.
    Let $R_\Delta = \diag(\bar \delta_{11}, \ldots, \overline \delta_{nn})$, then $\bDelta = \widehat{\bDelta} R_\Delta$ for some $\widehat{\bDelta} \in \dyndeltaset, \|\widehat{\bDelta}\|_1 \leq 1$. As Fig.~\ref{fig:robust_stability} with $\bDelta \mathbf G = \widehat{\bDelta}(R_\Delta \mathbf G)$ is unstable, we conclude the existence of a diagonal non-negative matrix $\widehat \Delta$ with $|\widehat \Delta|_{\infty, \infty} \leq 1$ so that $I - \widehat \Delta R_\Delta M_G$ is singular.
    Taking $\Delta = \widehat{\Delta} R_\Delta$ completes the first part of the proof.
        
    The proof of the converse result is identical but starts with $M_G$.
\end{proof}

Theorem~\ref{thm:equivalence} implies that we can replace the $\ell_1$ norm in \eqref{eq:mu} with any norm on the magnitude matrix of $\bDelta$ and get $\mu_\dyndeltaset(\mathbf G) = \mu_\statdeltaset(M_G)$ for free. 

Although we do not yet know how to compute $\nu$, from Fig.~\ref{fig:robust_stability} and Theorem~\ref{thm:equivalence} we know that $\nu$ must be absolutely homogeneous and invariant to similarity transforms with matrices that commute with $\dyndeltaset$.
Furthermore, we can translate the equivalence relationship between $|\cdot|_1$ and $|\cdot|_\infty$ into a corresponding relationship between $\nu$ and $\mu$.
We summarize the above discussion with the following proposition:
\begin{prop}
\label{prop:nu}
    With $\mathbf G$, $\dyndeltaset$, $\statdeltaset$ as in Theorem.~\ref{thm:equivalence}, let $\commutants \subset \R^{n\times n}$ be the set of non-negative diagonal matrices, then the following statements are true:
    \begin{enumerate}
        \item $\nu_\dyndeltaset(\mathbf G) = \nu_\statdeltaset(M_G)$
        \item $\nu_\dyndeltaset(a\mathbf G) = |a| \nu_\dyndeltaset(\mathbf G)$ for $a \in \R$.
        \item $\nu_\dyndeltaset(D\mathbf GD^{-1}) = \nu_\dyndeltaset(\mathbf G)$ for $D \in \commutants$.
        \item $\mu_\dyndeltaset(\mathbf G) / n \leq \nu_\dyndeltaset(\mathbf G) \leq \mu_\dyndeltaset(\mathbf G)$.
    \end{enumerate}
\end{prop}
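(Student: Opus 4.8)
\emph{Plan.} All four statements follow from Theorem~\ref{thm:equivalence} together with elementary manipulations of the infimum in Definition~\ref{def:nu}. The organizing observation is that for a diagonal $\bDelta \in \dyndeltaset$ the only data entering both the destabilization condition and the objective are the $n$ scalars $\|\bdelta_{ii}\|_1$; collecting them into the non-negative vector $m = (\|\bdelta_{11}\|_1, \ldots, \|\bdelta_{nn}\|_1)$, the $\nu$-objective is $\sum_i \|\bdelta_{ii}\|_1 = |m|_1$, while the $\mu$-objective is $\|\bDelta\|_{\infty,\infty} = |m|_\infty$. Each claim is then a statement about how the feasible set of $m$ and the chosen norm behave under one operation.

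For statement (1) I would apply Theorem~\ref{thm:equivalence} twice, once in each direction, to turn its existence equivalence into an equality of infima. Given any destabilizing $\bDelta$, setting $\deltabar_{ii} = \|\bdelta_{ii}\|_1$ and using the implication from condition (1) to condition (2) of Theorem~\ref{thm:equivalence} yields a $\Delta \in \statdeltaset$ with $\delta_{ii} \le \|\bdelta_{ii}\|_1$ and $I - \Delta M_G$ singular, so $\sum_i \delta_{ii} \le \sum_i \|\bdelta_{ii}\|_1$; hence the static infimum is at most the dynamic one. The reverse direction is symmetric, starting from a singular $I-\Delta M_G$ and invoking the implication from condition (2) to condition (1). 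Since the two infima coincide and $\nu_\statdeltaset(M_G)$ is by definition the reciprocal of the static infimum of $\sum_i \delta_{ii}$, statement (1) follows. This is exactly the reasoning already sketched in the text for $\mu_\dyndeltaset(\mathbf G) = \mu_\statdeltaset(M_G)$.

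Statements (2) and (3) are changes of variable that leave the feasible set invariant. For (2), the substitution $\bDelta' = a\bDelta$ (for $a \neq 0$; the case $a=0$ is immediate, since the zero system has empty feasible set and hence $\nu = 0$) keeps $\bDelta' \in \dyndeltaset$, satisfies $a\mathbf G\bDelta = \mathbf G\bDelta'$ so the destabilization condition is preserved, and scales the objective by $|a|$, giving $\nu_\dyndeltaset(a\mathbf G) = |a|\,\nu_\dyndeltaset(\mathbf G)$. For (3), because $D \in \commutants$ is a constant invertible diagonal matrix it commutes with every diagonal $\bDelta$, so $I - D\mathbf G D^{-1}\bDelta = D(I - \mathbf G\bDelta)D^{-1}$; this is singular precisely when $I - \mathbf G\bDelta$ is, while the objective $\sum_i\|\bdelta_{ii}\|_1$ is untouched. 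The two optimization problems therefore coincide and $\nu_\dyndeltaset(D\mathbf G D^{-1}) = \nu_\dyndeltaset(\mathbf G)$.

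Finally, statement (4) is the norm equivalence $|m|_\infty \le |m|_1 \le n|m|_\infty$ transported through the shared feasible set. Both $1/\mu_\dyndeltaset(\mathbf G)$ and $1/\nu_\dyndeltaset(\mathbf G)$ are infima over the \emph{same} set of destabilizing $\bDelta$, of $|m|_\infty$ and $|m|_1$ respectively. Because the inequalities hold pointwise in $m$, taking infima gives $1/\mu \le 1/\nu \le n/\mu$, and inverting yields $\mu_\dyndeltaset(\mathbf G)/n \le \nu_\dyndeltaset(\mathbf G) \le \mu_\dyndeltaset(\mathbf G)$. The one point demanding care, and the main obstacle, is statement (1): Theorem~\ref{thm:equivalence} asserts only an existence equivalence at fixed upper bounds, so one must run it in both directions with the bounds set to the incumbent values to conclude that the two infima are genuinely equal rather than merely comparable. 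Once this is established, the remaining claims are bookkeeping.
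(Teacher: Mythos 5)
Your proof is correct and follows essentially the same route as the paper, which states the proposition as a summary of the preceding discussion: statement (1) by applying Theorem~\ref{thm:equivalence} in both directions with the bounds $\deltabar_{ii}$ set to the incumbent values, (2) and (3) by change of variables using absolute homogeneity and commutation of constant diagonal $D$ with $\bDelta$, and (4) by the pointwise equivalence $|m|_\infty \leq |m|_1 \leq n|m|_\infty$ over the shared set of destabilizing perturbations. Your explicit observation that the existence equivalence at fixed per-entry bounds must be run in both directions to yield equality of infima is precisely the intended (but unspoken) content of the paper's argument.
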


The following theorem tightens the lower bound in 4) by zeroing out different diagonal elements.
This result agrees with intuition because we can study how a system interacts with sparse uncertainty by testing the different sparsity patterns separately.

\begin{theorem}
    \label{thm:lower_bound}
    Given $\mathbf G$ and $\dyndeltaset$ as in Definition~\ref{def:nu}.
    Let $I = (i_1, i_2, \ldots, i_m)$ with $m \leq N$ and $i_k \neq i_l$ for $k \neq l$ be an index tuple, and consider the sub-matrix of $M_G$:
    \begin{equation}
    \label{eq:subM}
        M_I = \bmat{
        M_{i_1i_1} & \cdots & M_{i_1i_m} \\
        \vdots & \ddots & \vdots \\
        M_{i_mi_1} & \cdots & M_{i_mi_m}
        }.
    \end{equation}
    Then $\nu_\dyndeltaset(\mathbf G) \geq \frac{\rho(M_I)}{m}$.
\end{theorem}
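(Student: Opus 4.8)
The plan is to prove the inequality by exhibiting a single explicit destabilizing perturbation that is supported only on the indices in the tuple $I$ and whose total weight equals $m/\rho(M_I)$. Since $\nu_\dyndeltaset(\mathbf{G})$ is by definition the reciprocal of the infimal total weight $\sum_i \|\bdelta_i\|_1$ over all destabilizing $\bDelta$, any feasible destabilizing perturbation with total weight $S$ immediately gives the lower bound $\nu_\dyndeltaset(\mathbf{G}) \geq 1/S$. Taking $S = m/\rho(M_I)$ then yields the claim, so the whole task is to construct one such perturbation.

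First I would pass to the constant, non-negative setting. By Proposition~\ref{prop:nu}(1), $\nu_\dyndeltaset(\mathbf{G}) = \nu_\statdeltaset(M_G)$, so it suffices to produce a non-negative diagonal matrix $\Delta \in \statdeltaset$ with $\sum_i \delta_{ii} \leq m/\rho(M_I)$ for which $I - \Delta M_G$ is singular. This discards the dynamics entirely and reduces everything to a statement about the non-negative matrix $M_G$ and its principal submatrix $M_I$.

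The key observation is that confining the support of $\Delta$ to the index set $I$ collapses the singularity condition onto $M_I$. Let $P \in \R^{n \times m}$ be the selection matrix whose columns are the standard basis vectors indexed by $i_1, \ldots, i_m$, so that $P^\top M_G P = M_I$, and write $\Delta = P \Delta_I P^\top$ with $\Delta_I \in \R^{m \times m}$ diagonal and non-negative. Sylvester's determinant identity gives $\det(I - \Delta M_G) = \det(I_m - \Delta_I M_I)$, so $I - \Delta M_G$ is singular exactly when $I_m - \Delta_I M_I$ is. Because $M_I$ is non-negative, Perron--Frobenius guarantees that $\rho(M_I)$ is itself an eigenvalue of $M_I$; hence the uniform choice $\Delta_I = \rho(M_I)^{-1} I_m$ renders $I_m - \Delta_I M_I$ singular. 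Equivalently, writing $v \geq 0$ for the Perron eigenvector, one verifies directly that the zero-padding of $v$ lies in the kernel of $I - \Delta M_G$. This $\Delta$ has exactly $m$ nonzero diagonal entries, each equal to $\rho(M_I)^{-1}$, for a total weight of $m/\rho(M_I)$, which closes the argument.

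I do not expect a genuine obstacle: the uniform scaling $\Delta_I = \rho(M_I)^{-1} I_m$ is not claimed to be optimal, only feasible, and feasibility is all a lower bound on $\nu$ requires. The two points needing care are the reduction to the submatrix — the identity $\det(I - \Delta M_G) = \det(I_m - \Delta_I M_I)$, which is precisely where the sparsity pattern of $\Delta$ is exploited — and the degenerate case $\rho(M_I) = 0$, in which the asserted bound $\nu_\dyndeltaset(\mathbf{G}) \geq 0$ holds trivially.
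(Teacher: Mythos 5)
Your proof is correct, and its skeleton matches the paper's: restrict the perturbation's support to the index set $I$, invoke Proposition~\ref{prop:nu}(1) to pass to the constant non-negative matrix $M_G$, and collapse the singularity condition onto the principal submatrix $M_I$. You diverge in the two technical steps, in ways worth noting. For the collapse, the paper partitions $M_G$ into blocks (with $\Delta = \diag(\Delta_1, 0)$ the matrix $I - \Delta M_G$ becomes block triangular), whereas you use the Sylvester identity $\det(I - P\Delta_I P^\top M_G) = \det(I_m - \Delta_I M_I)$; these are interchangeable. More substantively, the paper finishes by citing Theorem~\ref{thm:DhaKha} (so that the minimal destabilizing $\Delta_1$ in $|\cdot|_{\infty,\infty}$-norm has norm exactly $1/\rho(M_I)$) and then the norm-comparison inequality $\nu \geq \mu/m$ from Proposition~\ref{prop:nu}(4), while you compress that chain into a single explicit feasible point: $\Delta_I = \rho(M_I)^{-1} I_m$, destabilizing by Perron--Frobenius, of total weight $m/\rho(M_I)$. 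Your construction is exactly the worst case implicit in the paper's chain (the uniform perturbation where the sum equals $m$ times the max), so the underlying mathematics is the same, but your route buys self-containedness (no appeal to Theorem~\ref{thm:DhaKha} or Proposition~\ref{prop:nu}(4)), makes transparent that a lower bound on $\nu$ requires only exhibiting one destabilizing perturbation rather than a robustness equivalence, and explicitly disposes of the degenerate case $\rho(M_I) = 0$, which the paper's phrasing (``equivalent to $|\Delta_1|_{\infty,\infty} \leq 1/\rho(M_I)$'') glosses over; the paper's version is shorter given its already-established toolkit.
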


\begin{proof}
    Assume without loss of generality that $i_k = k$ for $k = 1, \ldots, m$. This assumption can always be enforced by renaming the signals. Restrict $\bDelta$ by setting $\bdelta_{kk} = 0$ for $k > m$. By Proposition~\ref{prop:nu} $\nu_\dyndeltaset(\mathbf G) = \nu_\dyndeltaset(M_G)$, so it is sufficient to give the proof in the constant matrix case. Let $\Delta_1 = \diag(\delta_{11}, \ldots, \delta_{mm})$ be the submatrix of $\Delta$ that is nonzero and partition $M_G$ into
    \[
    M_G = \bmat{
    M_{11} & M_{12} \\
    M_{21} & M_{22}
    },
    \]
    where $M_{11} \in \R^{m \times m}$. 
    Thus $I - \Delta M_G$ is invertible if and only if $(I - \Delta_1 M_{11})$ is invertible, which is equivalent to $|\Delta_1|_{\infty,\infty} \leq 1/\rho(M_I)$.
    From the fourth property of Proposition~\ref{prop:nu} we conclude that $\nu_\dyndeltaset(\mathbf G) \geq \rho(M_I)/(m)$.
\end{proof}

\subsection{An upper bound of \texorpdfstring{$\nu$}{}}

If the norm on the magnitude matrix of $\Delta$ is one in the upper triangle of Table~\ref{tab:tropptable}, then we can use the corresponding dual norm in the lower triangle to construct an upper bound.

Although the induced norm from $\infty$ to $1$, in general, is NP-hard to compute, it coincides with the absolute sum for diagonal matrices.
To see this, consider
\begin{equation}
        |\Delta|_{\infty, 1} = \sup_{|x|_\infty = 1}\sum_{i = 1}^n|\delta_{ii}x_i| = \sum_{i = 1}^n|\delta_{ii}|.
\end{equation}

This implies that if $|M_G|_{1, \infty} < 1 / |\Delta|_{\infty, 1}$ then $I - \Delta M_G$ is non-singular.
As $\nu_\dyndeltaset$ is invariant under similarity transformations with $D \in \commutants$, we suggest the following upper bound:
\begin{equation}
\label{eq:nubar}
    \nubar_\dyndeltaset(\mathbf G) := \inf_{D \in \commutants} |DM_GD^{-1}|_{1, \infty}
\end{equation}

The $1$ to $\infty$ norm is the maximum absolute element of a matrix, see Table~\ref{tab:tropptable}, and can be computed for large-scale connected systems by local evaluation and communication with the closest neighbors.

\begin{remark}
    The $(1, \infty)$ and $(\infty, 1)$-norms on magnitude matrices $M_G$ and $M_\Delta$ correspond to norms on $\mathbf G$ and $\mathbf \Delta$ induced by the $1$ and $\infty$-norms on the magnitude vectors. That is, let $y = \mathbf G x$, then $|m_y|_\infty \leq |M_G|_{1, \infty}|m_x|_1$ and if $z = \mathbf \Delta v$ then $|m_z|_1 \leq |M_\Delta|_{\infty, 1}||m_v|_\infty$. One could start with this observation, define $\nu$ with respect to the induced norm on $\mathbf \Delta$, and get the same results as in this paper. We prefer the simplicity of Theorem~\ref{thm:equivalence}.
\end{remark}

We end this section by noting that for positive systems, the $\Hinf$-norm is achieved by a stationary input~\cite{Rantzer2015Scalable, Colombino2016Convex}, so robustness analysis can be done entirely on positive matrices in that case too. We suspect one can derive similar results for positive systems as those in this article.
\begin{conjecture}
    For positive systems, there exists a convex upper bound for a robustness measure against a causal, diagonal, linear time-varying uncertainty $\bDelta$ bounded in the following norm $\|\bDelta\| = \|\bdelta_1\|_\infty + \|\bdelta_2\|_\infty + \cdots + \|\bdelta_n\|_\infty$.
\end{conjecture}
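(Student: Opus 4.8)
The plan is to mirror the development of $\nu$ and $\nubar$ in Section~\ref{sec:nu}, but with the $\ell_2$-induced ($\Hinf$) norm playing the role of the $\ell_\infty$-induced norm, exploiting the cited fact \cite{Rantzer2015Scalable, Colombino2016Convex} that a positive system attains its $\Hinf$ gain at a stationary (DC) input. Concretely, for positive $\mathbf G$ I would let $\widehat G := \sum_\tau G(\tau) \geq 0$ denote the non-negative DC-gain matrix and define the robustness measure
\[
    \nu^{\Hinf}_\dyndeltaset(\mathbf G) := \frac{1}{\inf\{\sum_{i=1}^n \|\bdelta_i\|_\infty : \bDelta \in \dyndeltaset,\ (I-\mathbf G\bDelta)^{-1}\text{ unstable}\}},
\]
and then show it is upper bounded by the convex quantity $\inf_{D \in \commutants}|D\widehat G D^{-1}|_{1,\infty}$, exactly as in \eqref{eq:nubar} but with $M_G$ replaced by $\widehat G$.

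The first step is a positive-systems analogue of Theorem~\ref{thm:equivalence}: I would prove that if $(I - \mathbf G\bDelta)^{-1}$ is unstable for some diagonal LTV $\bDelta$ with $\|\bdelta_{ii}\|_\infty \leq \deltabar_{ii}$, then $I - \Delta \widehat G$ is singular for some non-negative diagonal constant $\Delta$ with $\delta_{ii} \leq \deltabar_{ii}$. Only this forward direction is needed to certify an upper bound on $\nu^{\Hinf}$, and it is where positivity does the work: because $\mathbf G$ is positive its worst-case $\ell_2$ amplification is realized by a stationary signal, so the destabilizing loop can be frozen at $z=1$, collapsing the dynamic feedback to the constant feedback $\Delta\widehat G$ with the same per-channel bounds. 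This replaces the Dahleh--Khammash phase-alignment argument underlying Theorem~\ref{thm:equivalence}.

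Given the reduction, the remaining steps parallel Section~\ref{sec:nu} and are routine. Since $\Delta$ is diagonal, $\sum_i|\delta_{ii}| = |\Delta|_{\infty,1}$, so dual-norm submultiplicativity forces $|\Delta|_{\infty,1}\,|\widehat G|_{1,\infty} \geq 1$ whenever $I - \Delta\widehat G$ is singular; the singularity condition is invariant under the similarity $\widehat G \mapsto D\widehat G D^{-1}$ for $D \in \commutants$ (which preserves non-negativity and diagonality), exactly as invoked for $\nu$ in Proposition~\ref{prop:nu}, yielding $\nu^{\Hinf}_\dyndeltaset(\mathbf G) \leq \inf_{D \in \commutants}|D\widehat G D^{-1}|_{1,\infty} =: \nubar^{\Hinf}_\dyndeltaset(\mathbf G)$. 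Convexity is then immediate: writing $D = \diag(e^{d_1},\ldots,e^{d_n})$ gives $|D\widehat G D^{-1}|_{1,\infty} = \max_{i,j} e^{d_i - d_j}\widehat G_{ij}$, a pointwise maximum of exponentials of affine functions, hence convex in $d$, so the defining infimum is a convex program.

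The main obstacle is the first step. For the $\ell_\infty/\ell_1$ setting the equivalence rests on the magnitude matrix $M_G$ and a clean worst-case signal construction; for the $\Hinf$ setting I must instead make precise the sense in which a diagonal LTV (not merely LTI) $\bDelta$ achieves its worst case at DC against a positive $\mathbf G$, and confirm that the per-channel bounds $\|\bdelta_{ii}\|_\infty$ transfer exactly to the constant bounds $\delta_{ii}$. Whether the stationary-input property of \cite{Rantzer2015Scalable, Colombino2016Convex} extends verbatim to the closed-loop instability certificate under structured LTV uncertainty, rather than to the open-loop gain alone, is the delicate point, and is presumably why the statement is posed as a conjecture.
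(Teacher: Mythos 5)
The statement you are trying to prove is one of the paper's open conjectures: the paper offers no proof of it, so there is nothing to match your argument against, and the relevant question is whether your proposal actually closes the gap the authors left open. It does not, and you correctly diagnose where it fails: your entire reduction hinges on the unproven ``first step,'' a positive-systems analogue of Theorem~\ref{thm:equivalence} asserting that a destabilizing diagonal LTV $\bDelta$ with per-channel bounds $\|\bdelta_{ii}\|_\infty \leq \deltabar_{ii}$ yields a constant non-negative diagonal $\Delta$, $\delta_{ii}\leq\deltabar_{ii}$, with $I-\Delta\widehat G$ singular. The cited stationary-input property \cite{Rantzer2015Scalable, Colombino2016Convex} is a statement about the \emph{open-loop} $\Hinf$ gain of a positive LTI system; the destabilizing $\bdelta_{ii}$ are time-varying and need not be positive operators, so ``freezing the loop at $z=1$'' does not follow from those results, and the phase/alignment construction that Theorem~\ref{thm:equivalence} borrows from \cite{Dahleh1993Controller} has no off-the-shelf $\ell_2$ counterpart with the exact per-channel bound transfer you need. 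In short, the unproven step \emph{is} the mathematical content of the conjecture, so what you have is a program, not a proof --- consistent with the authors posing it as a conjecture rather than a theorem.

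That said, the routine parts of your plan are sound and track the paper's own development of $\nubar$: the dual-norm step ($\sum_i|\delta_{ii}| = |\Delta|_{\infty,1}$, and singularity of $I-\Delta\widehat G$ forcing $|\Delta|_{\infty,1}\,|\widehat G|_{1,\infty}\geq 1$ via $\rho(\Delta\widehat G)\leq|\Delta|_{\infty,1}|\widehat G|_{1,\infty}$), the invariance under $D\in\commutants$, and the convexity of $\inf_D\max_{ij}e^{d_i-d_j}\widehat G_{ij}$ exactly mirror Section~\ref{sec:nu} and \eqref{eq:nu_linprog}. If you want to attempt closing the gap, a more promising route than invoking the stationary-input property directly is to combine two known ingredients: (i) for structured LTV uncertainty bounded in the $\ell_2$-induced norm, the diagonally scaled small-gain condition $\inf_D\|D\mathbf G\bar\Delta D^{-1}\|_{\Hinf}<1$ is necessary as well as sufficient (Shamma/Megretski-type nonconservatism results, which is also the mechanism behind the exactness of $D$-scales in Theorem~\ref{thm:DhaKha}); and (ii) for a positive system the $\Hinf$ norm equals the spectral norm of the DC gain, and for a non-negative matrix diagonal scaling drives the spectral norm down to the spectral radius, so the scaled condition collapses to $\rho(\widehat G\bar\Delta)<1$. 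Even then, you would still owe the bookkeeping that converts the max-norm statement into the sum-norm one --- the analogue of the factorization $\bDelta = \widehat{\bDelta}R_\Delta$ with $R_\Delta = \diag(\deltabar_{11},\ldots,\deltabar_{nn})$ used in the proof of Theorem~\ref{thm:equivalence} --- before the LASSO-style measure $\sum_i\|\bdelta_i\|_\infty$ inherits the bound. None of this appears in your proposal, so as written it establishes only the easy half of the conjecture.
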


\section{Properties of \texorpdfstring{$\nubar$}{}.}\label{sec:nubar_analysis}

The lower bound in Theorem~\ref{thm:lower_bound} shows that if the maximum absolute value is achieved on the diagonal of $M_G$, then the upper bound coincides with the lower bound and is exact.
These types of systems are called \emph{diagonally maximal} and merit a formal definition.

 \begin{definition}[Diagonally Maximal]
    A Matrix $A \in \mathbb{R}^{n \times n}$ is \textit{diagonally maximal} if the maximum absolute element of $A$ appears on the diagonal. A dynamical system $\mathbf G$ is diagonally maximal if its magnitude matrix $M_G$ is diagonally maximal.
 \end{definition}

The following important corollary follows from applying Theorem~\ref{thm:lower_bound} to each diagonal element.
\begin{corollary}
    \label{cor:diagonally_maximal}
    If the matrix $DM_G D^{-1}$ is \emph{diagonally maximal} for some $D \in \D$, then $\nubar_\dyndeltaset(\mathbf G) = \nu_\dyndeltaset(\mathbf G)$.
\end{corollary}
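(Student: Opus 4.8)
The plan is to show that both $\nu_\dyndeltaset(\mathbf G)$ and its upper bound $\nubar_\dyndeltaset(\mathbf G)$ are equal to the largest diagonal entry $\max_i M_{ii}$ of $M_G$, thereby sandwiching them between identical quantities. Since $\nubar$ is by construction an upper bound for $\nu$, the inequality $\nu_\dyndeltaset(\mathbf G) \le \nubar_\dyndeltaset(\mathbf G)$ is available for free, so it suffices to produce an upper bound on $\nubar$ and a matching lower bound on $\nu$ and then chain them together.

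First I would record the elementary observation that conjugation by a positive diagonal $D$ leaves the diagonal untouched: $(DM_GD^{-1})_{ii} = D_{ii}M_{ii}D_{ii}^{-1} = M_{ii}$. Because $M_G$ is a magnitude matrix it is entrywise non-negative, hence so is $DM_GD^{-1}$, and by Table~\ref{tab:tropptable} its $(1,\infty)$-norm is simply its largest entry. For the particular $D \in \D$ under which $DM_GD^{-1}$ is diagonally maximal, that largest entry is by definition attained on the diagonal, so $|DM_GD^{-1}|_{1,\infty} = \max_i (DM_GD^{-1})_{ii} = \max_i M_{ii}$. Feeding this single $D$ into the infimum \eqref{eq:nubar} then yields $\nubar_\dyndeltaset(\mathbf G) \le \max_i M_{ii}$. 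For the matching lower bound I would invoke Theorem~\ref{thm:lower_bound} with the singleton index tuples $I = (i)$, $m = 1$: here $M_I = M_{ii}$ is a $1 \times 1$ non-negative matrix whose spectral radius equals $M_{ii}$, so the theorem gives $\nu_\dyndeltaset(\mathbf G) \ge M_{ii}$ for every $i$, and taking the maximum over $i$ gives $\nu_\dyndeltaset(\mathbf G) \ge \max_i M_{ii}$. Combining, $\max_i M_{ii} \le \nu_\dyndeltaset(\mathbf G) \le \nubar_\dyndeltaset(\mathbf G) \le \max_i M_{ii}$, which forces equality throughout and proves the claim.

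I do not anticipate a serious obstacle, as the statement is advertised as a direct consequence of Theorem~\ref{thm:lower_bound}. The only point that demands a little care is the middle step: one must notice that diagonal maximality is exactly the condition under which the max-element $(1,\infty)$-norm coincides with the conjugation-invariant largest diagonal entry, and that the hypothesis is assumed only for \emph{some} $D$, which is precisely what the infimum in \eqref{eq:nubar} requires to turn the pointwise bound into a bound on $\nubar$.
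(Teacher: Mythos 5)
Your proof is correct and follows the paper's intended route: the paper proves this corollary precisely by applying Theorem~\ref{thm:lower_bound} to each diagonal element (your singleton tuples $I=(i)$ giving $\nu_\dyndeltaset(\mathbf G) \geq \max_i M_{ii}$), combined with the observation that plugging the diagonally-maximalizing $D$ into the infimum \eqref{eq:nubar} yields the matching upper bound $\nubar_\dyndeltaset(\mathbf G) \leq \max_i M_{ii}$, since diagonal conjugation leaves the diagonal entries unchanged. Your write-up simply makes explicit the sandwiching step that the paper leaves implicit in its one-line justification.
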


Going back to the systems $\mathbf P_1$ and $\mathbf P_2$ in the introduction, we see that $\mathbf P_1$ is diagonal and hence diagonally maximal and $\nu_\dyndeltaset(\mathbf P_1) = \nubar_\dyndeltaset(\mathbf P_1) = 1$.
However, for $\mathbf P_2$ the upper bound is conservative. Indeed, $1/n = \nu_\dyndeltaset(\mathbf P_2) \leq \nubar_\dyndeltaset(\mathbf P_2) = 1 $.
The following theorem describes the gap between $\nu$ and $\nubar$.
\begin{theorem}
With $\nu$, $\mathbf G$ and $\dyndeltaset$ as in Definition~\ref{def:nu} and $\nubar$ as in \eqref{eq:nubar}, it is true that
    \[
        1 \leq \frac{\nubar(\mathbf G, \dyndeltaset)}{\nu(\mathbf G, \dyndeltaset)} \leq n.
    \]
    Furthermore, the lower bound is achieved by systems $\mathbf G$ that are \emph{diagonally maximal} under some similarity transform $D$ that commutes with $\dyndeltaset$.
    Pure rings achieve the upper bound.
\end{theorem}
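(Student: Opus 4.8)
The plan is to reduce everything to the magnitude matrix and then assemble the two inequalities from results already in hand, reserving the real work for the extremal cases. By Proposition~\ref{prop:nu}(1), $\nu_\dyndeltaset(\mathbf G) = \nu_\statdeltaset(M_G)$, and $\nubar_\dyndeltaset(\mathbf G)$ is defined directly through $M_G$, so I work throughout with the constant non-negative matrix $M := M_G$. The lower bound $\nubar_\dyndeltaset/\nu_\dyndeltaset \geq 1$ then requires nothing new, since $\nubar$ was constructed as an upper bound for $\nu$: the argument preceding~\eqref{eq:nubar} shows that $|DMD^{-1}|_{1,\infty} < 1/|\Delta|_{\infty,1}$ forces $I-\Delta M$ non-singular, and as $|\Delta|_{\infty,1} = \sum_i \delta_{ii}$ for diagonal $\Delta$, every destabilizing $\Delta$ satisfies $\sum_i \delta_{ii} \geq 1/\nubar_\dyndeltaset(\mathbf G)$, whence $\nu_\dyndeltaset \leq \nubar_\dyndeltaset$.

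For the upper bound I would prove the chain $\nu_\dyndeltaset \leq \nubar_\dyndeltaset \leq \mu_\dyndeltaset \leq n\,\nu_\dyndeltaset$. The middle step is the entrywise comparison $|A|_{1,\infty} \leq |A|_{\infty,\infty}$ on $n\times n$ matrices (the maximum absolute element never exceeds the maximum row sum, per Table~\ref{tab:tropptable}); infimizing both sides over $D \in \commutants$ yields $\nubar_\dyndeltaset = \inf_D|DMD^{-1}|_{1,\infty} \leq \inf_D|DMD^{-1}|_{\infty,\infty} = \mu_\dyndeltaset$, where the final identity is the equivalence of items (4) and (5) in Theorem~\ref{thm:DhaKha} together with homogeneity. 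The closing inequality $\mu_\dyndeltaset \leq n\,\nu_\dyndeltaset$ is the left half of Proposition~\ref{prop:nu}(4). Dividing through by $\nu_\dyndeltaset$ gives $\nubar_\dyndeltaset/\nu_\dyndeltaset \leq n$.

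The equality cases are where the content sits. That diagonally maximal systems attain the lower bound is immediate from Corollary~\ref{cor:diagonally_maximal}, which already states $\nubar_\dyndeltaset(\mathbf G) = \nu_\dyndeltaset(\mathbf G)$ whenever $DM_GD^{-1}$ is diagonally maximal for some $D \in \D$. For the upper bound I would take a pure ring, i.e.\ a $\mathbf G$ whose magnitude matrix is a single $n$-cycle; by homogeneity (Proposition~\ref{prop:nu}(2)) it suffices to treat $M = P$, the cyclic permutation with $P_{i,i+1}=1$ (indices mod $n$). Here $\Delta P$ is a weighted cycle with $\det(I-\Delta P) = 1 - \prod_{i=1}^n \delta_{ii}$, so $I - \Delta P$ is singular exactly when $\prod_i \delta_{ii} = 1$; minimizing $\sum_i\delta_{ii}$ subject to this constraint by AM--GM gives the value $n$ (attained at $\delta_{ii}\equiv 1$), hence $\nu_\dyndeltaset(\mathbf G) = 1/n$. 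For $\nubar$, the nonzero entries of $DPD^{-1}$ are the ratios $d_i/d_{i+1}$, whose product around the cycle telescopes to $1$; thus $\max_i d_i/d_{i+1} \geq 1$ for every $D$, with equality when all $d_i$ agree, so $\nubar_\dyndeltaset(\mathbf G)=1$ and the ratio equals $n$.

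The only real obstacle is the ring computation: one must correctly identify that singularity of $I-\Delta P$ is governed by the cycle product $\prod_i\delta_{ii}$, and then recognize the two resulting optimizations --- AM--GM for $\nu$ and the telescoping-product bound for $\nubar$ --- as the matching halves that pin the ratio to exactly $n$. Everything else is bookkeeping layered on Proposition~\ref{prop:nu}, Corollary~\ref{cor:diagonally_maximal}, and the norm identities in Table~\ref{tab:tropptable}.
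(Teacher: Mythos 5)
Your proposal is correct and follows essentially the same route as the paper: the lower bound by construction of $\nubar$, the chain $\nu_\dyndeltaset \leq \nubar_\dyndeltaset \leq \mu_\dyndeltaset \leq n\,\nu_\dyndeltaset$ via $|A|_{1,\infty}\leq|A|_{\infty,\infty}$ and Proposition~\ref{prop:nu}(4), equality from Corollary~\ref{cor:diagonally_maximal}, and for the ring the same singularity condition $\prod_i \delta_{ii}=1$ with $\min \sum_i \delta_{ii} = n$. Your micro-arguments are if anything slightly tidier --- AM--GM in place of the paper's substitution-and-convexity step, and an explicit telescoping-product verification that $\nubar = 1$ for the ring, which the paper merely asserts after ``scaling, balancing, and relabeling.''
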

\begin{proof}
    By construction $\nubar(\mathbf G, \dyndeltaset) \geq \nu(\mathbf G, \dyndeltaset)$, and by Corollary~\ref{cor:diagonally_maximal} the upper bound is exact for systems that are diagonally maximal under some similarity transform that commutes with $\dyndeltaset$.

    By $|M_G|_{1, \infty} \leq |M_G|_{\infty, \infty}$ and Proposition~\ref{prop:nu} we have that $\nubar_\dyndeltaset(\mathbf G) \leq \mu_\dyndeltaset(\mathbf G) \leq n\nu_\dyndeltaset(\mathbf G)$.
    It remains to show that the upper bound is achieved for pure ring systems.
    After scaling, balancing, and relabeling the signals, a pure ring system is of the form
        \begin{align*}
            x_1(t+1) = \delta_{11}x_2(t),\quad \cdots,\quad x_n(t+1) & = \delta_{nn}x_1(t).
        \end{align*}
By Proposition~\ref{prop:nu} $\nu_\dyndeltaset(G) = \nu_\statdeltaset(M_G)$, so we will study the null space of $I - M_G\Delta$.
$I - M_G\Delta$ has a nontrivial null space if for some non-zero $w\in \R^n$,
\[
    (I - M_G\Delta)w = 0 \iff \bmat{
    w_1 - \delta_{22}w_2 \\
    w_2 - \delta_{33}w_3 \\
    \vdots \\
    w_n - \delta_{11}w_1
    }
    = 0.
\]
If $w_1 = 0$, then by substitution we must have $w = 0$.
So assume without loss of generality that $w_1 = 1$.
Then we have that $I - M_G\Delta$ has a nontrivial null space if and only if 
\begin{equation}
    \label{eq:prod_delta}
    \delta_{11}\cdots \delta_{nn} = 1.
\end{equation}

We proceed to lower bound $\sum_{i = 1}^n \delta_{ii}$ by minimizing it subject to \eqref{eq:prod_delta}.
Substitute $\delta_{nn} = 1 / \prod_{i=1}^{n-1} \delta_{ii}$ into the sum to transform the constrained optimization problem into a convex optimization problem over $\delta_{ii} > 0$ with the solution $\min_{\delta_{ii}}\sum_{i=1}^n\delta_{ii} = n$.
Substitute the lower bound on a destabilizing $\Delta$ into Definition~\ref{def:nu} to get
\[
\frac{\nubar(\mathbf G, \dyndeltaset)}{\nu(\mathbf G, \dyndeltaset)} \geq n,
\]
as $\nubar_\dyndeltaset(\mathbf G) = 1$.
Since the upper bound is equal lower bound, we conclude that the bound is achieved.
\end{proof}

By the discussion in this section it is clear that even though $\nubar$ bounds $\nu$, the gap can be pretty significant.
It stands to reason that $\nubar$ is exact for some class of disturbances.
\begin{conjecture}
    $\nubar$ is exact for some class of norm-bounded disturbances.
\end{conjecture}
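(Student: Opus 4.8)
The plan is to first give $\nubar$ a purely combinatorial description and then read off the disturbance class from that description. Writing $D = \diag(d_1,\dots,d_n)$ with $d_i > 0$ and recalling from Table~\ref{tab:tropptable} that the $(1,\infty)$ norm is the largest absolute entry, we have
\[
\nubar_\dyndeltaset(\mathbf G) = \inf_{d > 0}\ \max_{i,j}\ \frac{d_i}{d_j}\,(M_G)_{ij}.
\]
Substituting $t_i = \log d_i$ turns the inner objective into $\log (M_G)_{ij} + t_i - t_j$, and minimizing its maximum over the edges of the weighted digraph with adjacency $M_G$ is the classical min--max potential problem whose value is the maximum mean cycle. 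I would prove, as a lemma, that
\[
\nubar_\dyndeltaset(\mathbf G) = \max_{C}\Big(\prod_{(i,j)\in C}(M_G)_{ij}\Big)^{1/|C|},
\]
the maximum taken over directed cycles $C$ of the graph of $M_G$: the telescoping of $t_i - t_j$ around any cycle gives the lower bound, and a shortest-path (Bellman--Ford) potential attains it. This already explains the factor-$n$ gap proved earlier, since the cycle mean replaces the $\rho(M_I)/m$ lower bound of Theorem~\ref{thm:lower_bound} by the same product \emph{without} the $1/|C|$ penalty; that exponent is precisely the averaging that the $\ell_1$-aggregation in $\nu$ pays for.

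With this identity in hand, the candidate exact class is the set of disturbances that close a single directed loop of $\mathbf G$, measured in the $\mu$-norm $\|\bDelta\|_{\infty,\infty} = \max_i\|\bdelta_{ii}\|$ rather than the $\ell_1$-aggregation of $\nu$. Concretely, let $C^\star$ be a cycle attaining the maximum above, with nodes $v_1 \to v_2 \to \cdots \to v_k \to v_1$ and edge weights $w_\ell = (M_G)_{v_{\ell+1}v_\ell}$, so that $\prod_\ell w_\ell = \nubar^{\,k}$. Choosing the diagonal perturbation supported on $C^\star$ with every active gain equal, $\|\bdelta_{v_\ell v_\ell}\| = 1/\nubar$, makes the cyclic product $\prod_\ell w_\ell \delta_{v_\ell} = 1$, which by the same computation as in the pure-ring case renders $I - M_G\bDelta$ singular. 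This exhibits a destabilizing disturbance of $\|\cdot\|_{\infty,\infty}$-size exactly $1/\nubar$, so the robustness measure of this class is at least $\nubar$.

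For the matching inequality I would show that no single-loop disturbance of smaller size destabilizes. Restricting the interconnection to a single directed cycle $C$ reduces the stability test to the scalar loop equation $\prod_\ell w_\ell \delta_\ell = 1$; writing $s = \max_\ell \|\bdelta_{\ell\ell}\|$ for the largest active gain gives $1 = \prod_\ell |w_\ell \delta_\ell| \le s^{|C|}\prod_\ell w_\ell$, hence $s \ge \big(\prod_\ell w_\ell\big)^{-1/|C|}$. Minimizing over cycles, the easiest loop to break is the maximum-mean one, where the bound is $s \ge 1/\nubar$; combining with the construction above shows that the robustness measure of the single-loop class equals $\nubar$ exactly. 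This would establish the conjecture with $\max_i\|\bdelta_{ii}\|$ as the (genuine) norm and single-loop support as the structural restriction.

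The hard part will be making the single-loop restriction rigorous. If I merely restrict the \emph{support} of $\bDelta$ to the nodes of $C^\star$, the principal submatrix of $M_G$ on those nodes generally carries chords, and its ordinary spectral radius can exceed the tropical one $\nubar$, so by Theorem~\ref{thm:DhaKha} the support-restricted measure collapses back to $\mu_\dyndeltaset \ge \nubar$ rather than to $\nubar$ itself. The obstacle is therefore to formalize a class that sees only the pure cycle --- for instance, uncertainty entering as a scalar gain on a designated loop transfer function --- while still being a genuine norm-bounded subset of $\dyndeltaset$. A promising simplification is that, because $C^\star$ has \emph{maximum} mean, every chord-induced sub-cycle among its nodes has mean at most $\nubar$; I would try to leverage this maximality, together with the optimal scaling $D^\star$ that equalizes the binding entries of $D^\star M_G (D^\star)^{-1}$, to argue that chords cannot lower the destabilization threshold below $1/\nubar$, which would let the clean support-restricted class serve as the sought-after class.
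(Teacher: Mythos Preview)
The paper does not prove this statement; it is posed as an open conjecture, so there is no proof to compare against and your proposal must be judged as an attempted resolution.

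Your cycle--mean identity
\[
\nubar_\dyndeltaset(\mathbf G)\;=\;\max_{C}\Big(\prod_{(i,j)\in C}(M_G)_{ij}\Big)^{1/|C|}
\]
is correct and is a clean way to read Theorem~\ref{thm:suff} and Lemma~\ref{lemma:loop}: the optimal scaling $D$ is determined by a maximum-geometric-mean cycle. This part is worth keeping regardless of the conjecture.

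The proposed disturbance class, however, does not recover $\nubar$, and the obstacle you flag as ``the hard part'' is in fact fatal. Take $M_G$ to be the $n\times n$ all-ones matrix. Every cycle has geometric mean $1$, so $\nubar=1$; the Hamiltonian cycle is a maximum-mean cycle $C^\star$ with $V(C^\star)=\{1,\dots,n\}$, so the principal submatrix on $V(C^\star)$ is $M_G$ itself with $\rho=n$. For your class (diagonal $\bDelta$ supported on the nodes of some cycle, measured in $\|\cdot\|_{\infty,\infty}$), Theorem~\ref{thm:DhaKha} applied to the active block gives robustness $\max_C\rho(M_{V(C)})=n=\mu_\dyndeltaset(\mathbf G)$, not $\nubar=1$. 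Note also that your explicit construction fails here: with $\Delta=I$ (all $\delta_{ii}=1/\nubar=1$) the cyclic product is $1$, yet $\det(I-M_G)=1-n\neq0$, so ``cyclic product equals one'' does \emph{not} imply singularity once chords are present. Your proposed fix cannot rescue this: the fact that every sub-cycle of $C^\star$ has mean at most $\nubar$ bounds only the tropical spectrum of $M_{V(C^\star)}$, not its ordinary spectral radius, and in the all-ones example the optimal scaling is $D^\star=I$, every entry of $D^\star M_G(D^\star)^{-1}$ already equals $\nubar$, and still $\rho=n$.

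More generally, any class obtained by restricting the \emph{support} of a diagonal $\bDelta$ and using the $\|\cdot\|_{\infty,\infty}$ norm will, by Theorem~\ref{thm:DhaKha}, return $\rho$ of a principal submatrix of $M_G$; these are the quantities appearing in Theorem~\ref{thm:lower_bound} and are tied to $\mu$ and $\nu$, not to $\nubar$. If the conjecture holds, the exact class must involve a different norm or non-diagonal structure; the cycle--mean formula is a good compass for guessing what that class should be, but the single-loop-support class is not it.
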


We conclude this section by studying $2\times 2$ matrices.

\subsection{A closed-form formula for \texorpdfstring{$2\times 2$}{} matrices}

As $\nu$ is invariant under similarity transforms with $D \in \D$ and scaling, we consider matrices $M \in \R^{2 \times 2}$ of the form
\[
    M = \bmat{x & 1\\ 1 & y}.
\]
If $x> 1$ or $y > 1$ we know that $\nu_\statdeltaset(M) = \max\{x, y\}$ so we will consider the case $0 < x, y < 1$.
We begin by parameterizing all destabilizing $\Delta$ in $\delta_{22}$.
Setting the determinant to zero we get
\begin{align*}
    \frac{1}{\det(M)}\left( y + \frac{-1}{x - \det(M)\delta_{22}}\right) & = \delta_{11}.
\end{align*}
Thus $\nu_\statdeltaset(M) = \delta_{11}(\delta_{22}) + \delta_{22} $ is convex on the domain $[0, 1]$ and the minimum is achieved either on the boundary or at a stationary point.
For $0 < x, y < 1$ we get
\begin{equation}
    \delta_{11} = \frac{y - 1}{\det(M)}, \quad \delta_{22} = \frac{x - 1}{\det(M)}, \nu(M, \dyndeltaset) = \frac{\det(M)}{x + y - 2}.
\end{equation}

In Fig.~\ref{fig:ratios} we compare the new robustness metric $\nu$, the upper bound $\nubar$ and $\mu$ for $2\times 2$-matrices. We see that $\nubar$ is exact for and only for matrices that are diagonally maximal under some $D \in \commutants$ and conclude that even for diagonally maximal systems, $\nu$ and $\mu$ can be very different.
As the closed-loop maps generated by system-level synthesis often seem to be diagonally maximal, we conclude that for a large class of relevant systems, computing both $\nubar$ and $\mu$ gives additional information into the nature of destabilizing disturbances even for this class of systems.
Based on this observation we state the following conjecture.
\begin{conjecture}
    $\nubar_\statdeltaset(M) = \nu_\statdeltaset(M)$ only if $DMD^{-1}$ is diagonally maximal for some $D \in \statdeltaset$.
\end{conjecture}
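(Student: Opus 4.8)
The plan is to establish the contrapositive in its natural form: assuming $\nu_\statdeltaset(M)=\nubar_\statdeltaset(M)$, show that $M$ is diagonally maximal under some $D\in\commutants$. Everything rests on a Perron--Frobenius reformulation of the smallest destabilizing perturbation. First I would note that for $M\ge 0$ and diagonal $\Delta\ge 0$ the substitution $\Delta\mapsto t\Delta$ scales every eigenvalue of $\Delta M$ linearly, so the smallest $t$ for which $I-\Delta M$ is singular is $t=1/\rho(\Delta M)$, attained with a \emph{non-negative} Perron eigenvector $v$. Writing $\Delta Mv=v$ and $\delta_{ii}=v_i/(Mv)_i$ on $\mathrm{supp}(v)$, and using minimality to delete indices outside $\mathrm{supp}(v)$, I obtain the reformulation
\[
\frac{1}{\nu_\statdeltaset(M)}=\min_{0\neq v\ge 0}\ \sum_{i\in\mathrm{supp}(v)}\frac{v_i}{(Mv)_i},
\]
with the minimum attained at some $\Delta^\star$ (the feasible set is closed and the objective coercive on it, provided the interconnection can be destabilized at all).

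Next I would fix a scaling $D^\star\in\commutants$ attaining the infimum in \eqref{eq:nubar} and pass to $M':=D^\star M (D^\star)^{-1}$, which is legitimate since both $\nu$ and $\nubar$ are invariant under such transforms (Proposition~\ref{prop:nu}). By construction every entry satisfies $M'_{ij}\le\gamma$ and $\max_{ij}M'_{ij}=\gamma$, where $\gamma:=\nubar_\statdeltaset(M)$. The elementary bound $(M'w)_i=\sum_j M'_{ij}w_j\le\gamma\sum_j w_j$ then gives, term by term,
\[
\sum_{i\in\mathrm{supp}(w)}\frac{w_i}{(M'w)_i}\ \ge\ \frac{1}{\gamma}\,\frac{\sum_i w_i}{\sum_j w_j}\ =\ \frac{1}{\gamma},
\]
which re-derives $\nu\le\nubar$. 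I would also record the purely algebraic fact that $M$ is diagonally maximal under some $D$ if and only if $\gamma=\max_k M_{kk}$, since the diagonal of $M'$ equals that of $M$ and $\gamma\ge\max_k M_{kk}$ always.

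The equality analysis is where the statement is won. Let $v^\star$ be the Perron eigenvector of $\Delta^\star M'$, strictly positive on its support $S$, so the bound above is applied at an interior point of the $S$-simplex. The hypothesis $1/\nu=1/\gamma$ forces every term to be tight, i.e.\ $(M'v^\star)_i=\gamma\sum_{j\in S}v^\star_j$ for all $i\in S$; since $M'_{ij}\le\gamma$ and $v^\star_j>0$, this compels $M'_{ij}=\gamma$ for all $i,j\in S$, a constant block. In particular $M_{ii}=M'_{ii}=\gamma$ for $i\in S$, so $\gamma=\max_{ij}M'_{ij}$ is achieved on the diagonal of $M'$; hence $M'=D^\star M (D^\star)^{-1}$ is diagonally maximal, which proves the conjecture.

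I expect the main obstacle to be rigor in the reduction to a strictly positive interior minimizer: justifying attainment of the minimum, pruning $\mathrm{supp}(\Delta^\star)$ down to $\mathrm{supp}(v^\star)$ by minimality, and ruling out minimizing sequences that escape to the boundary (these correspond to effectively smaller supports and should be absorbed by an induction on $n$). A related caveat, which I would fold into the hypotheses, is nondegeneracy: the argument presumes the loop can be destabilized and $\gamma>0$, and indeed the statement as written fails for acyclic (nilpotent) $M$ such as $\bmat{0&1\\0&0}$, where $\nu=\nubar=0$ yet no $D$ makes $M$ diagonally maximal; restricting to irreducible $M$ (or $\nu>0$) removes this pathology. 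By comparison the Perron reformulation and the one-line tightness step are routine once $D^\star$ is in hand, and the attainment of the infimum defining $\nubar$ follows from standard min--max (maximum cycle mean) considerations.
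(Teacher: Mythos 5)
The paper contains no proof of this statement: it is one of the paper's open conjectures, stated on the strength of the numerical comparison for $2\times 2$ matrices in Fig.~\ref{fig:ratios}. So there is no author proof to compare against; judged on its own terms, your argument is essentially correct and would settle the conjecture, with one amendment that you yourself supply. The Perron--Frobenius reformulation $1/\nu_\statdeltaset(M)=\min_{0\neq v\ge 0}\sum_{i\in\mathrm{supp}(v)} v_i/(Mv)_i$ is sound: at a minimal destabilizing $\Delta$ one has $\rho(\Delta M)=1$ with a nonnegative eigenvector, attainment follows from closedness of $\{\Delta\ge 0:\rho(\Delta M)\ge 1\}$ plus coercivity of $\sum_i\delta_{ii}$, and zeroing $\delta_{ii}$ off $\mathrm{supp}(v)$ preserves singularity without increasing the sum. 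For the scaling you do not even need irreducibility: if $\nu_\statdeltaset(M)>0$ then the digraph of $M$ contains a cycle with positive weight product, so the linear program \eqref{eq:nu_linprog} has finite optimal value and, being an LP, attains it, which yields an invertible optimizer $D^\star\in\commutants$ for \eqref{eq:nubar}. The tightness step is the crux and is correct: with $M'=D^\star M(D^\star)^{-1}$ and $M'_{ij}\le\gamma$, equality $\nu=\gamma$ forces $(M'v^\star)_i=\gamma\sum_{j\in S}v^\star_j$ for every $i\in S$, and positivity of $v^\star$ on $S$ then compels $M'_{ij}=\gamma$ for all $i,j\in S$; since diagonal entries are invariant under diagonal similarity, the maximum $\gamma$ sits on the diagonal of $M'$, which is diagonal maximality. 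As a byproduct your argument proves something strictly stronger than the conjecture asks: equality forces an entire constant block of value $\gamma$ on $S\times S$, not merely one maximal diagonal entry.

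Your degeneracy caveat is not a technicality but a genuine correction to the statement: for $M=\bmat{0&1\\0&0}$ one has $\nu_\statdeltaset(M)=\nubar_\statdeltaset(M)=0$ (no diagonal $\Delta$ can destabilize a nilpotent pattern, and $d_1/d_2\to 0$ drives the scaled maximum to zero), yet $DMD^{-1}$ has its only nonzero entry off the diagonal for every invertible $D\in\statdeltaset$, so the conjecture as literally written is false. With the hypothesis $\nu_\statdeltaset(M)>0$ added, your proof goes through and, combined with Corollary~\ref{cor:diagonally_maximal}, upgrades the ``only if'' to an ``if and only if'' on that class. The remaining gaps you flag (attainment, support pruning, normalizing $d'_{j_1}>d_{j_1}$-type boundary issues) are routine and none is an obstruction; the write-up should simply state the nondegeneracy hypothesis up front rather than absorbing it into an induction.
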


\begin{figure}
    \centering
    \includegraphics[width=.3\textwidth]{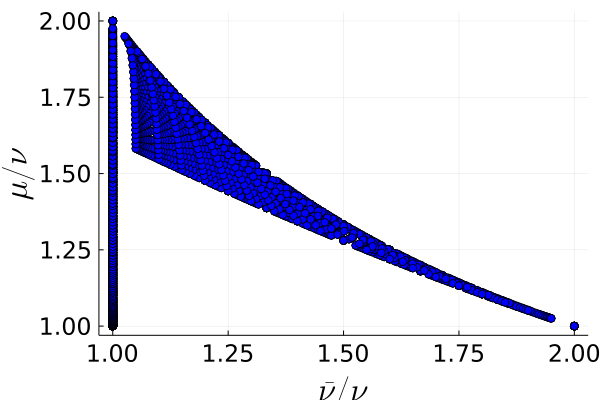}
    \caption{Comparing the $\ell_1$-robustness metric $\mu$, the new metric $\nu$ and the upper bound $\nubar$ for matrices of the form $M = \bmat{x & w \\ w & y}$ for $x, w, y \in [0, 1]$. The matrices along the $\nubar/\nu = 1$ line are the diagonally maximal matrices. In the bottom left corner we have the identity matrix, in the top left corner we have the matrix $\bmat{1 & 1 \\ 1 & 1}$ and in the bottom right corner we have $\bmat{0 & 1 \\ 1 & 0}$.}
    \label{fig:ratios}
\end{figure}

\section{Computing \texorpdfstring{$\nubar$}{}}\label{sec:nubar_computation}

\subsection{The convex approach}
This section explains how to formulate $\nubar$ as a linear program.
Let $M \in \R^{n \times n}$ be a positive matrix.
We want to compute 
\begin{equation}
    \label{eq:nu_comp}
    \inf_{D \in \D}  \max_{ij}\left \{M_{ij}\frac{d_i}{d_j}\right\},
\end{equation}
As the logarithm is strictly increasing, \eqref{eq:nu_comp} is equivalent to
\[
\min_{D \in \D} \max_{ij}\{\log(M_{ij}) + \log(d_i) - \log(d_j)\},
\]
where we use the convention that $\log(0) = -\infty$.
Let $\beta_i = \log(d_i)$, then \eqref{eq:nu_comp} is equivalent to the following linear program that can be solved efficiently using simplex or interior-point methods~\cite{Todd2002Linear}:
\begin{equation}
\label{eq:nu_linprog}
\begin{aligned}
    \underset{\beta, \gamma}{\text{minimize}} \quad & \gamma \\
    \text{subject to:} \quad & \log(M_{ij}) + \beta_i - \beta_j \leq \gamma \\
    & \beta_i \in \R\ \text{for}\ i=1, \ldots, n, \quad \gamma \in \R.
    \end{aligned}
\end{equation}

\subsection{Characterizing the solutions of the upper bound}
We will relax the positivity assumption of $d_1, \ldots, d_n$ \eqref{eq:nu_comp} to allow $d_i$s to be zero.
Consider the function
\begin{equation}\label{eq:phi}
        \phi_d(M, i, j) = \begin{cases}
        M_{ij}\frac{d_i}{d_j} & \text{if } M_{ij} > 0 \\
        0 & \text{if } M_{ij} = 0.
        \end{cases}
\end{equation}
Then \eqref{eq:nu_comp} is equivalent to
\begin{equation}\label{eq:nu_comp_nonneg}
    \inf_{d_1, \ldots, d_n \geq 0}\max_{ij}\phi_d(M, i, j).
\end{equation}

The following theorem shows that if for some $D\in \commutants$, the maximizing indices of $DMD^{-1}$ only consists of loops, then $D$ minimizes \eqref{eq:nu_comp_nonneg}.
\begin{theorem}[Sufficient condition for optimality]\label{thm:suff}
    Given a non-negative, non-zero matrix $M\in \R^{n\times n}$ and non-negative constants $d_1, \ldots, d_n$. With $\phi$ as in \eqref{eq:phi}, let $\mathcal I$ be the set of maximizing indices of \eqref{eq:nu_comp}, i.e.
    \[
        \mathcal I = \left \{ (k, l) : \phi_d(M, k, l) = \max_{ij}\phi_d(M, i, j) \right \}.
    \]
    If for all $(k, l) \in \mathcal I$ it holds that
    \begin{equation}
        \phi_d(M, k, l) = \max_{j}\phi_d(M, l, j).
    \end{equation}
    Then $d_1, \ldots, d_n$ is an optimal solution to \eqref{eq:nu_comp}.
\end{theorem}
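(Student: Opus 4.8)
The plan is to exploit the hypothesis to locate a directed cycle of maximizing indices and to show that the value $\gamma^\ast := \max_{ij}\phi_d(M,i,j)$ is pinned down by that cycle, independent of the scaling. First I would dispose of the trivial case $\gamma^\ast = 0$: since $\phi_d \geq 0$ everywhere, the objective is bounded below by $0$, so any $d$ achieving $\max_{ij}\phi_d(M,i,j)=0$ is automatically optimal. Hence assume $\gamma^\ast > 0$.

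Next I would extract a cycle. Read the hypothesis as a statement about the directed graph on $\{1,\ldots,n\}$ whose edges are the maximizing pairs $\mathcal I$: the condition $\phi_d(M,k,l) = \max_j \phi_d(M,l,j)$ says that the head $l$ of every maximizing edge $(k,l)$ is itself the tail of some maximizing edge $(l,l') \in \mathcal I$. Starting from any edge in $\mathcal I$ (which is nonempty, as the maximum is attained) and repeatedly following an outgoing maximizing edge produces an infinite walk in a finite graph, which must revisit a vertex; the portion between two visits is a directed cycle $C=(i_1 \to i_2 \to \cdots \to i_p \to i_1)$ all of whose edges lie in $\mathcal I$ (a self-loop with $p=1$ is permitted).

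The key computation is then a telescoping product around $C$. Since every edge of $C$ is maximizing, $\phi_d(M,i_t,i_{t+1}) = \gamma^\ast > 0$ for each $t$ (indices read mod $p$), which forces $M_{i_t i_{t+1}} > 0$ and $d_{i_t}, d_{i_{t+1}} > 0$. Multiplying the $p$ equalities and cancelling the scalars $d_{i_t}$, each of which appears once in a numerator and once in a denominator around the loop, gives $(\gamma^\ast)^p = \prod_{t=1}^p M_{i_t i_{t+1}}$, a quantity that does not depend on $d$. For optimality I would then produce a matching lower bound: for any positive diagonal scaling $d'$ the same product telescopes, $\prod_{t=1}^p \phi_{d'}(M,i_t,i_{t+1}) = \prod_{t=1}^p M_{i_t i_{t+1}} = (\gamma^\ast)^p$, and since the maximum of $p$ nonnegative numbers is at least their geometric mean, $\max_{ij}\phi_{d'}(M,i,j) \geq \gamma^\ast$. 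Thus no positive scaling achieves a value below $\gamma^\ast$, while $d$ itself attains $\gamma^\ast$, so $d$ realizes the infimum in \eqref{eq:nu_comp}.

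I expect the extraction of the cycle in the second step to be the main obstacle, since it is where the somewhat opaque hypothesis is converted into usable combinatorial structure; the telescoping identity and the geometric-mean bound are routine once $C$ is in hand. A secondary point requiring care is the relaxation to nonnegative $d$ together with the convention $\phi_d = +\infty$ when $d_j=0$ but $M_{ij}>0$: because the optimality claim is stated against \eqref{eq:nu_comp}, where the scalings range over strictly positive diagonals, it suffices to prove the lower bound for positive $d'$, so the boundary of the relaxed feasible set never actually enters the argument.
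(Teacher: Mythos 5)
Your proof is correct, and its first half --- following outgoing maximizing edges (the hypothesis guarantees the head of every edge in $\mathcal I$ is the tail of another) until the finite vertex set forces a directed cycle lying entirely in $\mathcal I$ --- is exactly the paper's first step. The second half, however, takes a genuinely different route. The paper argues by contradiction: supposing some $d'$ achieved a strictly smaller maximum, it normalizes $d'$ so that $d'_{j_1} > d_{j_1}$ at a cycle vertex, propagates the strict ratio inequality $d'_{j_{k+1}} > d_{j_{k+1}}\, d'_{j_k}/d_{j_k}$ edge by edge around the loop, and closes the cycle to obtain $d'_{j_k} > d'_{j_k}$. You instead multiply the $p$ equalities $\phi_d(M,i_t,i_{t+1}) = \gamma^\ast$ around the cycle, cancel the scalings, obtain the scale-invariant identity $(\gamma^\ast)^p = \prod_{t} M_{i_t i_{t+1}}$, and conclude via the max-exceeds-geometric-mean bound that every positive scaling $d'$ yields a maximum at least $\gamma^\ast$. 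The two arguments are multiplicative cousins (the paper's chain of ratio inequalities is your telescoping product run in contrapositive), but yours buys more: it is direct rather than by contradiction; it identifies the optimal value as a maximal geometric cycle mean of $M$, tying Theorem~\ref{thm:suff} to classical max-cycle-mean and balancing results; and it explicitly dispatches degeneracies the paper glosses over --- the $\gamma^\ast = 0$ case (untreated in the paper, whose ratio step needs $M_{j_1 j_2} > 0$) and the positivity implicitly required by the paper's normalization of $d'_{j_1}$, which you make honest by restricting the lower bound to the positive scalings over which \eqref{eq:nu_comp} actually ranges. One small reading note: since the theorem allows the given $d$ to have zero entries, $d$ need not be feasible for \eqref{eq:nu_comp} itself, so ``optimal'' is best read against the relaxed problem \eqref{eq:nu_comp_nonneg}; your argument still covers this provided you add one remark, namely that a non-negative competitor $d'$ with $d'_{i_{t+1}} = 0 < d'_{i_t}$ on the cycle forces an infinite entry of $\phi_{d'}$ (and the all-zero-on-cycle case runs into the $0/0$ ambiguity in \eqref{eq:phi} that the paper itself never resolves), so restricting to positive $d'$ loses nothing.
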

\begin{proof}
    First, we show that $\mathcal I$ must contain at least one loop.
    Let $(j_0, j_1) \in \mathcal I$, and let $j_{k + 1}$ be the smallest integer such that 
    \[
    \phi_d(M, j_k, j_{k+1}) = \max_{j}\phi_d(M, j_k, j)
    \]
    By induction $(j_k, j_{k+1}) \in \mathcal I$.
    Furthermore, as $n$ is finite, and the selection rule for $j_{k+1}$ is unique given $j_k$, there is a $K \geq 0$ and a $T \geq 1$ so that $j_{k + T} = j_{k}$ for all $k \geq K$.
    We denote the limit set containing such points by
    $I_* = \{j_k : k \geq K\}$.
    
    Assume towards a contradiction that there are $d_1', \ldots, d_n'$ so that 
    \[
        \max_{ij}\phi_{d'}(M, i, j) < \max_{ij}\phi_d(M, i, j)
    \]
    Let $(j_0, j_1) \in \mathcal I_*$. 
    Assume without loss of generality that $d'_{j_{1}} > d_{j_{1}}$, otherwise multiply every $d'_i$ by a positive constant so that the assumption holds true.
    Let $j_2 = \arg\max_j \phi_d(M, j_1, j)$.
    By assumption, it must hold that 
    \[
        \frac{d'_{j_1}}{d'_{j_2}} < \frac{d_{j_1}}{d_{j_2}} \iff d'_{j_2} > d_{j_2}\frac{d'_{j_1}}{d_{j_1}}.
    \]
    Continuing, we have that
    \[
    d'_{j_{k+T}} > d_{j_{k+T}}\frac{d'_{j_{k + T-1}}} {d_{j_{k + T - 1}}} > d_{j_{k+T}}\frac{d'_{j_{k}}}{d_{j_{k}}}.
    \]
    However, since $j_{k + T} = j_k$ we have that $d'_k > d'_k$ which is a contradiction.
\end{proof}

By the above theorem, we know that if the maximum is achieved on a loop, then the solution is optimal.
It turns out that an optimal solution must contain a loop.
This is because if the maximum is achieved on a chain, we can perturb the scales at the end of the chain to make that value smaller.
Since this shortens the chain.
Repeating this process reduces all the elements in the maximal chain.
We formalize this statement in the following Lemma:

\begin{lemma}
    \label{lemma:loop}
    Let $d^\star_1, \ldots, d^\star_n$ be an optimal solution to \eqref{eq:nu_comp_nonneg}, and let $\mathcal I$ be the set of maximizing indices as in Theorem~\ref{thm:suff}. Then $\mathcal I$ contains at least one loop.
\end{lemma}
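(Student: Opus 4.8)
The plan is to prove the contrapositive: if the set of maximizing indices $\mathcal I$ contains no loop, then the candidate optimum $d^\star$ can be strictly improved, and hence cannot have been optimal. Write $\gamma^\star := \max_{ij}\phi_{d^\star}(M,i,j)$ and interpret $\mathcal I$ as the edge set of a directed graph $H$ on the vertices $\{1,\dots,n\}$; ``$\mathcal I$ contains no loop'' means precisely that $H$ is acyclic. As a preliminary reduction I would show that, when $\gamma^\star>0$, every maximal edge has positive scalings at both endpoints. Indeed, writing $P=\{i:d^\star_i>0\}$, an edge $(k,l)\in\mathcal I$ with $d^\star_k>0$, $d^\star_l=0$ and $M_{kl}>0$ would force $\phi_{d^\star}(M,k,l)=+\infty$, contradicting finiteness of the optimum, while any edge leaving a zero-scaled vertex contributes $\phi=0<\gamma^\star$. (The case $\gamma^\star=0$ forces some $M_{ii}=0$, which is itself a self-loop in $\mathcal I$, so the statement holds trivially.) Hence all maximal edges lie inside the positive support $P$, on which every ratio $d^\star_i/d^\star_j$ is finite and strictly positive.

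Since $H$ restricted to $P$ is acyclic, it admits a topological ordering, i.e. a vector $t\in\R^n$ with $t_k<t_l$ for every edge $(k,l)\in\mathcal I$. I would then perturb in the log-domain, setting $d_i(\epsilon)=d^\star_i e^{\epsilon t_i}$ for $i\in P$ and leaving the zero scalings fixed. For any positive edge this gives $\phi_{d(\epsilon)}(M,i,j)=\phi_{d^\star}(M,i,j)\,e^{\epsilon(t_i-t_j)}$. On a maximal edge $(k,l)\in\mathcal I$ this equals $\gamma^\star e^{\epsilon(t_k-t_l)}<\gamma^\star$ because $t_k-t_l<0$, so every maximal edge is strictly relaxed; on a non-maximal edge the value at $\epsilon=0$ is already strictly below $\gamma^\star$. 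With finitely many edges, a single small $\epsilon>0$ works for all of them: putting $\gamma_2=\max\{\phi_{d^\star}(M,i,j):(i,j)\notin\mathcal I\}<\gamma^\star$ and $T=\max_{ij}|t_i-t_j|$, any $\epsilon>0$ with $\gamma_2 e^{\epsilon T}<\gamma^\star$ yields $\max_{ij}\phi_{d(\epsilon)}(M,i,j)<\gamma^\star$, contradicting optimality of $d^\star$. This is the rigorous form of the informal ``shorten the chain from its end'' argument sketched before the lemma, with the topological ordering increasing the scalings toward the sinks and thereby slackening every maximal edge at once; it complements the sufficiency direction of Theorem~\ref{thm:suff}.

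The step I expect to be the main obstacle is the degenerate boundary behavior of $\phi$ when \emph{both} endpoints of a positive edge carry zero scaling, where $M_{kl}d_k/d_l$ becomes $0/0$ and is not pinned down by \eqref{eq:phi}. I would dispatch this either by adopting the lower-semicontinuous extension inherited from the original $\inf_{d>0}$ problem (under which such an edge contributes $0$ and thus lies outside $\mathcal I$ when $\gamma^\star>0$), or, more cleanly, by restricting the whole analysis to the principal submatrix indexed by $P$ and running the perturbation there; the equivalence of \eqref{eq:nu_comp} and \eqref{eq:nu_comp_nonneg} guarantees that passing to this support does not change the optimal value. A secondary point to verify is simply the uniform-$\epsilon$ estimate above, ensuring that no previously dominated edge is pushed past $\gamma^\star$. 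The remaining ingredients -- existence of the topological ordering for an acyclic $H$, the multiplicative action of log-domain scaling on $\phi$, and finiteness of the edge set -- are routine.
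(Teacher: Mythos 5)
Your proof is correct, but it implements the improvement step differently from the paper, and more rigorously. The paper argues sequentially: it follows a maximal chain $j_0, j_1, \ldots$ via the greedy selection rule, observes that in the absence of a loop the chain must reach a vertex where the outgoing maximal value drops below the incoming one, and then perturbs the scaling at that chain end, propagating backward ``by induction'' and ``repeating for any other chain.'' Those last two steps are left informal, and they hide a real issue: single-coordinate perturbations applied one at a time can interact when maximal chains share vertices, and the paper never verifies that a perturbation fixing one edge does not re-elevate another. Your construction sidesteps this entirely: since $\mathcal I$ with no loop is an acyclic edge set, you take a topological ordering $t$ and apply the single log-linear rescaling $d_i(\epsilon) = d^\star_i e^{\epsilon t_i}$, which multiplies every maximal edge by $e^{\epsilon(t_k - t_l)} < 1$ simultaneously, while the uniform-$\epsilon$ estimate $\gamma_2 e^{\epsilon T} < \gamma^\star$ keeps all dominated edges dominated. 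A second genuine improvement is your explicit treatment of the boundary degeneracies of \eqref{eq:phi} on the relaxed domain of \eqref{eq:nu_comp_nonneg}: the paper silently ignores what happens when scalings vanish, whereas you show that finiteness of the optimum forbids edges from the positive support $P$ into zero-scaled vertices, that edges leaving zero-scaled vertices cannot be maximal when $\gamma^\star > 0$, and you flag and resolve the $0/0$ case (l.s.c.\ extension or restriction to $P$), which is needed even to make the lemma well-posed. What the paper's route buys is its local, greedy flavor, consistent with the chain machinery of Theorem~\ref{thm:suff} and the coordinate-wise Algorithm~\ref{alg:nu_comp}; what yours buys is a one-shot, fully verifiable certificate of non-optimality. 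One very minor imprecision: when $\gamma^\star = 0$ with all scalings zero, your parenthetical ``forces some $M_{ii} = 0$'' need not hold literally, but under your own $0/0$ convention the diagonal pairs still land in $\mathcal I$, so the self-loop conclusion survives and nothing breaks.
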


\begin{proof}
    If the optimal value is zero, all diagonal elements must be zero, and $(i, i) \in \mathcal I$ implies that $\mathcal I$ contains a loop.
    Assume towards a contradiction that $\mathcal I$ does not contain a loop and that the optimal value is greater than zero.
    Let $(j_0, j_1) \in \mathcal I$, and let $j_{k + 1}$ be the smallest integer such that 
    \[
    \phi_d(M, j_k, j_{k+1}) = \max_{j}\phi_d(M, j_k, j)
    \]
    By assumption there is a $k$ such that 
    \begin{equation}\label{eq:induct_decrease}
    \phi_d(M, j_k, j_{k+1})< \max_{i}\phi_d(M, i, j_k)
    \end{equation}
    This means that there is a $d'_k > 0$ that decreases the right hand side of \eqref{eq:induct_decrease} so that the inequality still holds for $j_k$, but also holds for $j_{k-1}$.
    By induction, this must hold for $1, \ldots, k$.
    Repeating for any other chain in $\mathcal I$, we conclude that
    \[
        \max_{ij}\phi_d(M, i, j) > \max_{ij}\phi_{d'}(M, i, j),
    \]
    contradicting optimality.
\end{proof}

Theorem~\ref{thm:suff} and Lemma~\ref{lemma:loop} indicate a relationship between solving \eqref{eq:nu_comp_nonneg} and balancing the matrix $M$ with respect to the maximal absolute element.
The following theorem strengthens that connection and shows that we can always find a solution to \eqref{eq:nu_comp_nonneg} by balancing $M$.
\begin{theorem}
    For any non-negative matrix $M \in \R^{n \times n}$, there exists a non-negative solution $d_1, \ldots, d_n$ to \eqref{eq:nu_comp_nonneg} such that
    \begin{equation}\label{eq:existance}
        \max_{r\neq k}\phi_d(M, r, k) = \max_{c\neq k}\phi_d(M, k, c),\quad \forall\ k = 1, \ldots, n.
    \end{equation}
\end{theorem}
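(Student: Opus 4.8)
The plan is to pass to logarithmic coordinates and recognize \eqref{eq:existance} as the \emph{max-balancing} condition for a weighted digraph. Setting $\beta_i = \log d_i$ and $w_{ij} = \log \phi_d(M,i,j) = \log M_{ij} + \beta_i - \beta_j$ (with $w_{ij} = -\infty$ when $M_{ij} = 0$), problem \eqref{eq:nu_comp_nonneg} becomes the minimization of $\max_{i\neq j} w_{ij}$ over node potentials, while the diagonal terms $w_{ii} = \log M_{ii}$ are fixed and merely impose a scaling-invariant floor. Condition \eqref{eq:existance} then says that at every vertex $k$ the largest incoming off-diagonal weight equals the largest outgoing one. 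Rather than exhibit an arbitrary optimal $d$, I would single out the one that \emph{lexicographically} minimizes the vector of off-diagonal values $\left(\phi_d(M,i,j)\right)_{i \neq j}$ sorted in non-increasing order, and then show that balance is forced on any such minimizer by a local rescaling argument in the spirit of Lemma~\ref{lemma:loop}.

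First I would establish existence of a lexicographic minimizer. The objective is invariant under $d \mapsto c d$, so I normalize to the compact set $\{d \geq 0 : \max_i d_i = 1\}$. Each off-diagonal map $d \mapsto \phi_d(M,i,j)$ is lower semicontinuous there (continuous where $d_j > 0$, and tending to $+\infty$ as $d_j \to 0^+$ with $d_i > 0$, $M_{ij} > 0$), hence so is the largest order statistic; its minimum is attained and its argmin set is compact. Restricting successively to the argmin of the largest value, then the second largest, and so on through the finitely many off-diagonal pairs yields, by repeated lower semicontinuity and compactness, a nonempty compact set of lexicographic minimizers; I fix one, call it $d$. (In passing, telescoping the $d_i$ along any cycle shows the optimal top value equals the maximum geometric cycle mean of $M$, reconciling with Lemma~\ref{lemma:loop}.)

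The crux is the exchange step. Suppose, toward a contradiction, that balance fails somewhere, and let $v$ be the \emph{largest} value at which it fails: some vertex $k$ has $\max\{\text{out}(k),\text{in}(k)\} = v$ with $\text{out}(k) \neq \text{in}(k)$, where $\text{out}(k) = \max_{c \neq k}\phi_d(M,k,c)$ and $\text{in}(k) = \max_{r\neq k}\phi_d(M,r,k)$. Say $\text{out}(k) = v > \text{in}(k)$ (the reverse case is symmetric). By maximality of $v$, every edge of weight exceeding $v$ joins two balanced vertices, and $k$ is incident to no such edge; so multiplying $d_k$ by $e^{-\varepsilon}$ leaves all weights above $v$ untouched, multiplies the outgoing weights at $k$ by $e^{-\varepsilon}$ — dropping those that equalled $v$ strictly below $v$ — and multiplies the incoming weights by $e^{\varepsilon}$, which for small $\varepsilon$ keeps them below $v$ since they started at $\text{in}(k) < v$. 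The number of off-diagonal values at level $v$ therefore strictly decreases while nothing at or above $v$ is created, so the sorted vector strictly decreases lexicographically, contradicting minimality. Hence balance holds at every vertex.

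I expect the main obstacle to be the bookkeeping in this exchange step — in particular verifying that decreasing $d_k$ cannot lift any weight up to level $v$ (handled by choosing $v$ to be the largest offending level) — together with the degenerate cases when some $d_i = 0$. There one restricts to the support $\{i : d_i > 0\}$, on which all active weights are finite and positive and the argument applies verbatim, while a vertex with $d_i = 0$ decouples (its finite-weight incident edges vanish under the convention $\phi_d = 0$), so \eqref{eq:existance} reads $0 = 0$ there. Translating back through $d_i = e^{\beta_i}$ then delivers the desired balanced optimal solution.
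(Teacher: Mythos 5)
Your argument is correct, but it takes a genuinely different route from the paper. The paper first secures existence of an optimal $d$ via the linear-programming reformulation, zeroes out the diagonal, invokes Lemma~\ref{lemma:loop} to find a maximizing loop in $\mathcal I$ (on which \eqref{eq:existance} holds automatically), then removes that loop's rows and columns and recurses on the smaller matrix to propagate balance to every vertex --- a peeling argument in the spirit of its loop machinery from Theorem~\ref{thm:suff}. You instead single out a \emph{lexicographic} minimizer of the sorted off-diagonal weight vector (well-defined by your normalization, lower semicontinuity, and nested-compactness argument, noting that finite minima and maxima of lsc functions are lsc, so each order statistic is) and show balance is forced at every vertex by a local rescaling of a single $d_k$: edges above the offending level $v$ are untouched, the multiplicity at level $v$ strictly drops, and nothing is lifted to level $v$, giving a strict lexicographic decrease --- in fact maximality of $v$ is not even needed, since only the fact that $k$ is incident to no edge of weight exceeding $v$ enters. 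This is essentially the Schneider--Schneider max-balancing existence argument, and it buys two things the paper's proof does not make explicit: it bypasses Lemma~\ref{lemma:loop} entirely, and it avoids the recursion bookkeeping that the paper leaves terse (how the scales on the removed loop interface with the remaining submatrix so that cross-block weights stay below the level already certified). Conversely, the paper's route is more constructive and reuses its existing LP and loop results, while yours requires the compactness/lsc scaffolding and care with zeros --- your handling of the degenerate cases is sound provided one fixes the convention $\phi_d(M,i,j)=0$ when $d_i=d_j=0$ with $M_{ij}>0$, under which finiteness of the optimal value forces $d_j>0$ whenever $M_{ij}>0$ and $d_i>0$, so vertices with $d_k=0$ are automatically balanced ($0=0$) and any unbalanced vertex has $d_k>0$, making your perturbation legitimate. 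Your parenthetical identification of the optimal top value with the maximum geometric cycle mean is also correct and consistent with the paper's Lemma~\ref{lemma:loop}, and since the diagonal entries $\phi_d(M,i,i)=M_{ii}$ are scaling-invariant, your off-diagonal lexicographic minimizer is indeed optimal for \eqref{eq:nu_comp_nonneg} as the theorem requires.
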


\begin{proof}
    We begin by proving the existence of a solution.
    Assume there is a sequence $i_1, \ldots, i_m$ such that $M_{i_ki_{k+1}}, M_{i_mi_1} \neq 0$ for $k = 1, \ldots, m$. 
    Then \eqref{eq:nu_comp} is bounded below by $\min\{M_{ij}, M_{ji}\}$ and \eqref{eq:nu_comp} is equivalent to a linear program with a bounded solution and the minimum is achieved by some $d_1, \ldots, d_n$.
    If the assumption is false, we can take $d = 0$ and the optimal value is zero.
    If $M$ is a diagonal matrix, then the claim holds trivially. Assume $M$ is not diagonal and let $\hat M$ be the matrix where $\hat M_{ij} = M_{ij}$ for $i \neq j$ and $\hat M_{ii} = 0$.
    Then $d_1, \ldots, d_n$ are optimal for $M$ if and only if they are optimal for $\hat M$.
    Note that \eqref{eq:existance} holds for a maximizing loop of $\hat M$.
        Let $d_1, \ldots, d_n$ be an optimal solution to $\eqref{eq:nu_comp}$ for $\hat M$.
    By Lemma~\ref{lemma:loop}, the set of maximizing indices $\mathcal I$ contains at least one loop.
    Remove the rows and columns pertaining the loop from $\hat M$ to get the smaller matrix $\hat M_1$.
    By recursion on $\hat M_k$ we end up with a new set $d'_1, \ldots, d'_n$ so that \eqref{eq:existance} is true.
\end{proof}

\subsection{An algorithm for balancing the magnitude matrix}
We now present a simple heuristic algorithm for computing \eqref{eq:nu_comp} that results from enforcing \eqref{eq:existance} coordinate-wise in Algorithm~\ref{alg:nu_comp}.
The algorithm lends itself to local computation and we show some empirical convergence properties in figures \ref{fig:tol} and \ref{fig:size}.
\begin{algorithm}
\caption{Heuristic algorithm for solving \eqref{eq:nu_comp}}\label{alg:nu_comp}
\begin{algorithmic}
    \Require Non-negative $M \in \R^{n \times n}$, $\theta \in (0, 1)$, $T$.
    \State $d_k[1] \gets 1$ \textbf{for} each $k = 1, \ldots, n$
    \For{$t = 1, \ldots, T$}
        \For{$k = 1, \ldots, n$}
            \State $
            d_k[t + 1] \gets (1 - \theta) d_k[t] + \theta\frac{\sqrt{\max_{r\neq k} M_{rk}d_r[t]}}{\sqrt{\max_{c\neq k} M_{kc}/d_c[t]}}
            $
        \EndFor
    \EndFor
\end{algorithmic}
\end{algorithm}
We remark that naively taking $\theta = 1$ may cause the algorithm to fail to converge.
Consider the matrix,
\[
    M = \bmat{0 & 1 \\ x^2 & 0}.
\]
Then $d_1(2) = x $ and $d_2(2) = 1/x$, leading to $D(2) M D^{-1}(2) = M^\tran$ and the iteration will continue to oscillate back and forth.
This is because we are updating each coordinate simultaneously, which is desirable for localized computation.
Introducing the interpolation $\theta \in (0, 1)$ seems to solve this issue. Based on the numerical results we conjecture that our algorithm has is guaranteed to converge.
\begin{figure}
    \centering
    \vspace{1em}
    \includegraphics[width=.3\textwidth]{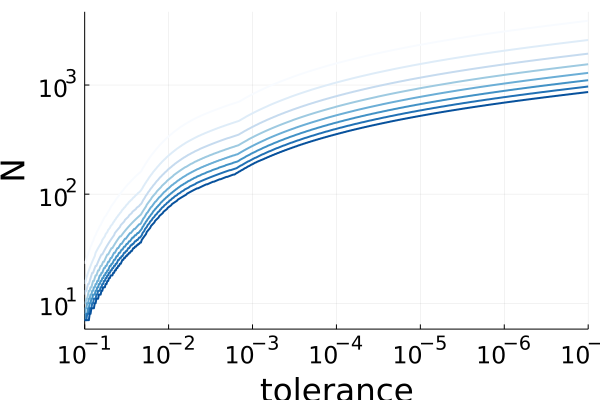}
    \caption{The largest number of iterations $N$ required to reach a relative tolerance level for $100$ randomly generated non-negative matrices $M \in \R^{128 \times 128}$, with respect to tolerance. $\theta$ ranges from $0.2$ (light blue) to $0.9$ (dark blue).}
    \label{fig:tol}
\end{figure}
\begin{figure}
    \centering
    \vspace{1em}
    \includegraphics[width=.3\textwidth]{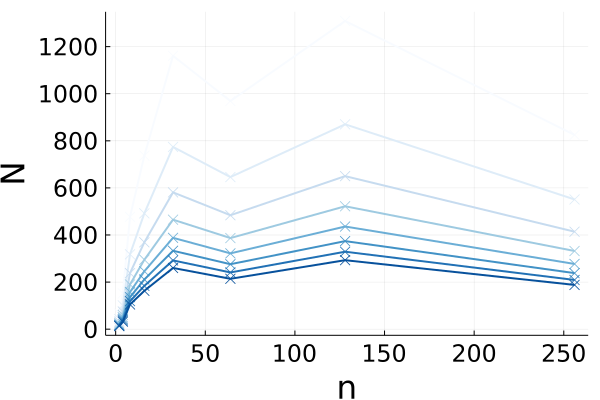}
    \caption{The largest number of iterations $N$ required to reach a relative tolerance level of $10^{-3}$ for $100$ randomly generated non-negative $n\times n$-dimensional matrices with respect to dimension $n$. $\theta$ ranges from $0.2$ (light blue) to $0.9$ (dark blue).}
    \label{fig:size}
\end{figure}

\begin{conjecture}
    Algorithm~\ref{alg:nu_comp} always converges.
    Moreover the number of iterations required to reach a given tolerance is of $O(log(n))$ for a fixed $\epsilon$, and $O(\sqrt{\epsilon^{-1}})$ for fixed $n$.
\end{conjecture}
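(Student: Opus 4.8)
The plan is to pass to logarithmic coordinates, where the object that Algorithm~\ref{alg:nu_comp} is trying to minimize becomes a convex, piecewise-linear, shift-invariant function, and where the update reveals itself as a damped simultaneous block-coordinate minimization. Writing $\beta_k = \log d_k$ and $L_{ij} = \log M_{ij}$ (with $L_{ij} = -\infty$ when $M_{ij}=0$), the objective \eqref{eq:nu_comp} becomes $\exp f(\beta)$ with $f(\beta) = \max_{i\neq j}(L_{ij} + \beta_i - \beta_j)$. This $f$ is convex (a pointwise maximum of affine functions), invariant under $\beta \mapsto \beta + c\mathbf 1$, and---by the existence theorem proved just above---attains its minimum, with any minimizer characterised by the balancing condition \eqref{eq:existance}. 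A direct computation shows that the undamped step ($\theta = 1$) sets $\beta_k$ to the exact minimizer of $f$ over the single coordinate $\beta_k$ with the others frozen: collecting the terms of $f$ involving $\beta_k$ gives $\max(\beta_k + a_k,\, -\beta_k + b_k)$ with $a_k = \max_{c\neq k}(L_{kc}-\beta_c)$ and $b_k = \max_{r\neq k}(L_{rk}+\beta_r)$, whose minimiser is $\beta_k = (b_k - a_k)/2$---precisely the log of the undamped update. Thus the iteration is the $\theta$-averaged Jacobi version of exact coordinate minimisation, and the role of $\theta \in (0,1)$ is to damp the oscillation in the $2\times 2$ counterexample, where the undamped map possesses a nontrivial $2$-cycle instead of converging to the balanced point.

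First I would establish convergence. In log-coordinates each map $\beta \mapsto a_k(\beta)$ and $\beta \mapsto b_k(\beta)$ is order-preserving and nonexpansive in $\|\cdot\|_\infty$ (a maximum of coordinate shifts), so the undamped coordinate map is nonexpansive on the quotient $\R^n/\R\mathbf 1$, and the damped update is an averaged nonexpansive operator $T_\theta = (1-\theta)\,\mathrm{id} + \theta T$. Its fixed points are exactly the $\beta$ satisfying \eqref{eq:existance}, i.e. the minimisers of $f$. I would then invoke the Krasnoselskii--Mann / Opial theory for averaged nonexpansive maps on the quotient: $f$ decreases monotonically along the iterates (strictly unless a fixed point is reached), the sublevel sets of $f$ are compact modulo constant shifts, so the iterates stay bounded and every cluster point is a fixed point; nonexpansiveness then upgrades subsequential to full convergence to a single minimiser. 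The essential input is that $T_\theta$ is genuinely averaged for $\theta<1$, which is exactly where the counterexample is ruled out.

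Next I would attack the rates, which I expect to be the hard part. For the $O(\log n)$ bound at fixed $\epsilon$, the idea is to bound the initial residual $f(\beta[1]) - \min f$ by the diameter of the maximal strongly connected loop structure of $M$---at most polynomial in $n$---and to show that once the iterate enters a region on which the maximising index set of \eqref{eq:nu_comp} has stabilised, the active max-terms turn $T_\theta$ into an affine map whose spectral radius is bounded away from $1$ \emph{independently of $n$}; geometric decay from a $\operatorname{poly}(n)$ initial error then costs $O(\log n)$ steps. The $O(\sqrt{\epsilon^{-1}})$ bound at fixed $n$ is more delicate, because it is sublinear (error $\sim N^{-2}$) rather than geometric, signalling that no uniform contraction can hold near the optimum---consistent with the flat, polyhedral geometry of $f$. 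Here I would look for a Nesterov-type discrete Lyapunov function (the averaging step playing the role of momentum) together with a Hoffman/error-bound inequality for the piecewise-linear $f$, and relate the limiting dynamics to the accelerated second-order ODE whose canonical rate is $N^{-2}$.

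The main obstacle is reconciling these two regimes and making the rate claims precise: the $\log n$ analysis presumes an eventual \emph{geometric} phase with an $n$-independent contraction, whereas the $\epsilon^{-1/2}$ analysis presumes a \emph{sublinear} tail with no contraction. The cleanest resolution I would aim for is a two-phase bound $O\!\big(\log n + \epsilon^{-1/2}\big)$: a burn-in during which the active index set and the sign pattern of $a_k - b_k$ settle (controlled by the loop structure and contributing the $\log n$ term), followed by a polish phase on the fixed polyhedral face where a momentum Lyapunov argument gives the $\epsilon^{-1/2}$ term. Proving that the active set stabilises in finitely many steps---and bounding that time by $O(\log n)$ uniformly over non-negative $M$---is where I expect the real work to lie, since it is precisely the combinatorial transient that the simultaneous (rather than sequential) updates make hard to control.
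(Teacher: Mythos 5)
First, a point of comparison you could not have known: the paper offers \emph{no} proof of this statement --- it is stated as a conjecture, supported only by the numerical experiments in Figures~\ref{fig:tol} and \ref{fig:size}. So there is no paper argument to match yours against; your proposal must stand or fall on its own. Much of its skeleton is sound and genuinely useful: in coordinates $\beta_k=\log d_k$, $L_{ij}=\log M_{ij}$, the objective is $f(\beta)=\max_{i\neq j}(L_{ij}+\beta_i-\beta_j)$ (this is exactly the paper's LP \eqref{eq:nu_linprog}), your computation showing the undamped step is exact Jacobi coordinate minimization, $\beta_k=(b_k-a_k)/2$, is correct, and fixed points of the iteration are precisely the balanced points \eqref{eq:existance}. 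One citation slip: optimality of balanced points follows not from the existence theorem but from Theorem~\ref{thm:suff} --- balance at every node forces every maximizing pair $(k,l)$ to satisfy $\phi_d(M,k,l)=\max_j\phi_d(M,l,j)$, which is that theorem's hypothesis. That is a small but real lemma your plan needs and the paper already supplies.

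The gaps are in the convergence and rate arguments, and they are substantive. (a) Algorithm~\ref{alg:nu_comp} damps \emph{arithmetically in $d$}, so in log-coordinates the map is $\beta_k\mapsto\log\bigl((1-\theta)e^{\beta_k}+\theta e^{T_k(\beta)}\bigr)$, which is \emph{not} the averaged operator $(1-\theta)\mathrm{id}+\theta T$; the Krasnoselskii--Mann/Opial machinery you invoke does not apply --- and even for genuine averaging it would not, because the natural norm here is $\ell_\infty$ on the quotient $\R^n/\R\mathbf 1$, which is neither uniformly convex nor Opial. What does survive is that both $T$ and the damped map are order-preserving and additively homogeneous (topical), hence sup-norm nonexpansive; but nonexpansiveness alone admits periodic orbits --- your own reading of the paper's $2\times 2$ oscillation example shows this --- so convergence requires an argument you have not supplied (the relevant literature is that of nonexpansive maps in polyhedral norms, where bounded orbits converge only to \emph{periodic} orbits in general). (b) Your claimed monotone decrease of $f$ along iterates is asserted, not proved, and it is precisely the property that fails for simultaneous Jacobi updates at $\theta=1$; whether damping restores it is the crux of the whole conjecture. (c) The $O(\sqrt{\epsilon^{-1}})$ rate is the weakest link: $(1-\theta)$-averaging is damping, not momentum --- it interpolates toward the update rather than extrapolating past it --- so there is no mechanism for the accelerated $N^{-2}$ decay; generic averaged fixed-point iterations yield residual decay of order $N^{-1/2}$, i.e.\ $O(\epsilon^{-2})$ iterations, and a Hoffman-type error bound would at best upgrade this to linear convergence on a fixed face, not to the Nesterov rate. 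In short: your structural observations (LP equivalence, Jacobi interpretation, fixed points $=$ balanced points $=$ optima via Theorem~\ref{thm:suff}) are correct and would be a genuine contribution toward resolving the conjecture, but neither convergence nor either rate claim is established, and the conjecture remains open after your proposal.
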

\section{Conclusions}\label{sec:conclusions}
This work introduced and analyzed a new robustness measure $\nu$ that reasonably handles sparsity.
We provided a convex upper bound $\nubar$, characterized its sub-optimality, and gave simple ways to compute it in a distributed way.
The companion paper, \cite{CompanionPaper_SLS} shows how to compute robust controllers for large-scale systems using $\mu$ and $\nu$. Throughout this article, we gave four conjectures representing important research topics. We conclude with a final conjecture on the computation of $\nu$.

\begin{conjecture}
    There exists a polynomial-time algorithm to compute $\nu$ within arbitrary precision.
\end{conjecture}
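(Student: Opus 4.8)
The plan is to first recast $\nu$ as a spectral-radius optimization and then confront the resulting non-convex program. By Proposition~\ref{prop:nu} it suffices to compute $\nu_\statdeltaset(M)$ for the non-negative magnitude matrix $M = M_G$. I would begin by proving the variational identity
\[
\nu_\statdeltaset(M) = \max\Big\{\rho(\diag(\delta)M) : \delta \geq 0,\ \textstyle\sum_{i=1}^n \delta_i = 1\Big\}.
\]
This follows from Perron--Frobenius together with the homogeneity $\rho(\diag(t\delta)M) = t\,\rho(\diag(\delta)M)$: the matrix $I - \diag(\delta)M$ is singular for some $\delta \geq 0$ of minimal $\ell_1$-sum exactly when $\rho(\diag(\delta)M)=1$, and rescaling onto the simplex converts minimization of $\sum_i \delta_i$ into maximization of the Perron root. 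Dualizing through the Collatz--Wielandt formula gives the equivalent eigenvector form $\nu_\statdeltaset(M)^{-1} = \min_{w\geq 0,\, w\neq 0} \sum_{i:\,w_i>0} w_i/(Mw)_i$, and the decision version $\nu_\statdeltaset(M) \geq \gamma$ holds if and only if $\diag(\delta)Mw \geq \gamma w$ is feasible for some $w \geq 0$, $w\neq 0$ and some $\delta$ on the simplex.

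Second, I would reduce the computation to a bisection on $\gamma$. Evaluating $\rho(\diag(\delta)M)$ to relative precision $\epsilon$ in time $\mathrm{poly}(n,\log(1/\epsilon))$ is routine (certified power iteration on a non-negative matrix, or the two-sided bounds $\min_i (Mw)_i\delta_i/w_i \leq \rho \leq \max_i (Mw)_i\delta_i/w_i$). The upper bound $\nubar$ and the submatrix lower bounds of Theorem~\ref{thm:lower_bound} localize the answer inside a $[\nu,\,n\nu]$ window and supply certificates to terminate the bisection. Thus everything rests on deciding, for a fixed threshold $c = 1/\gamma$, whether the sublevel set $\{w > 0 : h(w) \leq c\}$ of $h(w) = \sum_i w_i/(Mw)_i$ is non-empty.

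Third --- and this is the crux --- I would try to establish a hidden convexity of $h$ under the logarithmic substitution $w = e^{u}$. Kingman's theorem gives that $\log\rho(\diag(e^{s})M)$ is convex in $s$, which already constrains the geometry; the goal is to upgrade this to quasiconvexity of $u \mapsto h(e^{u})$, equivalently convexity of its sublevel sets. Each summand $1/\sum_j M_{ij}e^{u_j - u_i}$ is log-concave in $u$, and on the $2\times 2$ examples the sum collapses to unimodal expressions such as $t + t^{-1}$, which is encouraging. If quasiconvexity holds, the feasibility test becomes a convex feasibility problem solvable by the ellipsoid method or an interior-point scheme with a self-concordant barrier, and the bisection then yields $\nu$ to arbitrary precision in polynomial time.

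The main obstacle is that $h$ is a sum of quasiconvex (log-concave) terms, and such sums are generically \emph{not} quasiconvex, so I expect $h$ to possess spurious local minima for some $M$. The decisive step is therefore to either (i) prove that quasiconvexity nonetheless survives for the specific ratio-of-linear structure arising from a single non-negative matrix, or, failing that, (ii) certify global optimality by a different route: characterize the stationary points of $h$ through the cycle and strongly-connected structure of the directed graph of $M$ --- mirroring the loop and balancing arguments of Theorem~\ref{thm:suff}, Lemma~\ref{lemma:loop}, and the balancing existence theorem --- and show that the landscape is benign enough (for instance, that every stationary point supported on a single strongly connected component is global, and that only polynomially many candidate supports need to be examined). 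Controlling the interaction between competing strongly connected components, which is precisely the mechanism by which $\nu$ departs from the tropical quantity $\nubar$, is where I expect the real difficulty to lie.
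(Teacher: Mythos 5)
First, a framing point: the statement you set out to prove is one of the paper's explicitly open conjectures --- the authors supply no proof at all --- so there is no reference argument to compare against; the only question is whether your attempt settles the problem, and it does not. To your credit, the reductions in your first two steps are correct and verifiable. The identity $\nu_\statdeltaset(M) = \max\{\rho(\diag(\delta)M) : \delta \geq 0,\ \sum_{i}\delta_i = 1\}$ does follow from Perron--Frobenius together with homogeneity: singularity of $I-\Delta M$ forces $\rho(\Delta M)\geq 1$ for nonnegative data, scaling $\Delta$ down attains $\rho = 1$ with no larger budget, and degree-one homogeneity of $\rho(\diag(\delta)M)$ in $\delta$ converts the minimal $\ell_1$ budget into maximization of the Perron root over the simplex. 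Likewise the max--min exchange through Collatz--Wielandt correctly yields $\nu_\statdeltaset(M)^{-1} = \min\bigl\{\sum_{i:\,w_i>0} w_i/(Mw)_i : w \geq 0,\ w \neq 0\bigr\}$, provided you adopt the convention that a term with $w_i>0$ and $(Mw)_i = 0$ equals $+\infty$; for the pure ring this recovers $\nu = 1/n$ by the AM--GM inequality, consistent with the paper's ring computation. The bisection scaffolding is routine, as you say.

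The genuine gap is exactly where you locate it, but the evidence points more sharply against you than your write-up admits. Kingman's theorem makes $\log\rho(\diag(e^s)M)$ \emph{convex} in $s$, so your primal problem is the maximization of a convex function over (the log-image of) the simplex --- the canonically intractable direction --- which is evidence against a hidden convexity, not for it. Dually, each term $1/\sum_j M_{ij}e^{u_j-u_i}$ is log-concave, hence quasiconcave (not quasiconvex, as your phrasing suggests), and you are minimizing a sum of quasiconcave functions; already $1/(e^t+e^{-t})$ is non-convex, and $h(e^u)$ collapses to a convex sum of exponentials only when each row of $M$ has a single nonzero entry, i.e.\ precisely the ring case that encouraged you. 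Moreover, the support restriction in your dual is not cosmetic: for block-diagonal $M=\diag(M_1,M_2)$ the objective on the interior of the orthant equals $h_1+h_2$, while the true value is $\min(\min h_1,\min h_2)$, attained only on a face, so the support-restricted objective is discontinuous at the boundary and interior quasiconvexity could never suffice on its own. The salvageable piece of your fallback (ii) is the combinatorial reduction: since the spectral radius of a nonnegative matrix is attained on one of its at most $n$ strongly connected classes, the optimal $\delta$ may be taken supported on a single SCC of the digraph of $M$, reducing the problem to at most $n$ irreducible subproblems. But for an irreducible block your proposal offers only the \emph{hope} of a benign landscape, with no argument that stationary points of $h$ are global or that candidate supports can be enumerated polynomially; that is the decisive missing step, and it is exactly what the authors acknowledge by leaving the statement as a conjecture rather than a theorem.
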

\bibliography{main}
\bibliographystyle{IEEEtran}

\end{document}